\documentclass[a4paper,11pt]{amsart}
\pdfoutput=1

\bibliographystyle{plain}

\usepackage{mathtools}
\mathtoolsset{showonlyrefs=true}

\usepackage{amsmath,amssymb,amsthm,amsfonts, mathrsfs, fancyhdr}
\usepackage[usenames,dvipsnames,usenames,dvipsnames,svgnames,table]{xcolor}
\usepackage[expansion=false]{microtype}
\usepackage[all]{xy}

\usepackage{upgreek}

\usepackage{enumitem}
\makeatletter
\newcommand{\mylabel}[2]{#2\def\@currentlabel{#2}\label{#1}}
\makeatother

\setlist[description]{leftmargin=*}
\usepackage{amscd}
\usepackage[active]{srcltx}
\usepackage{verbatim}
\usepackage{epsfig,graphicx,color}
\usepackage{graphicx}
\usepackage{mathtools,xparse}
\usepackage[english]{babel}

\usepackage{pgf, tikz, tikz-cd}
\usepackage{dsfont}
\usetikzlibrary{matrix}


\usepackage{braids}

\usepackage{capt-of}
\usepackage{float}

\usepackage{verbatim}
\usepackage{cite}
\usepackage{manyfoot}








\usepackage{palatino,hhline, bbm}







\usepackage[left=2.7cm,right=2.7cm,top=3cm,bottom=3cm]{geometry}

\usepackage[unicode,bookmarks, pdftex, backref = page]{hyperref}
\definecolor{newblue}{RGB}{0,102,204}
\hypersetup{colorlinks=true,citecolor=newblue,linkcolor=newblue, urlcolor=teal,
pdfpagemode=UseNone, breaklinks=true}


\newtheorem{theorem}{Theorem}[section]
\newtheorem{proposition}[theorem]{Proposition}
\newtheorem{lemma}[theorem]{Lemma}

\newtheorem{question}[theorem]{Question}

\theoremstyle{definition}
\newtheorem{remark}[theorem]{Remark}
\newtheorem{example}[theorem]{Example}
\newtheorem{definition}[theorem]{Definition}


\renewcommand{\to}{\longrightarrow}
\renewcommand{\t}{\mathsf{t}}
\renewcommand{\r}{\mathsf{r}}
\newcommand{\z}{\mathsf{z}}
\newcommand{\x}{\mathsf{x}}
\renewcommand{\S}{\mathfrak{S}}


\usepackage{epigraph}
\setlength{\epigraphrule}{0pt}
\setlength{\epigraphwidth}{.55\textwidth}


\title[Double Kodaira structures]{Diagonal double Kodaira structures on finite groups}

\date{}


\author[Francesco Polizzi]{Francesco Polizzi}
\address{Dipartimento di Matematica e Informatica
\newline\indent
Universit\`a della Calabria
\newline\indent
Ponte Pietro Bucci 30B, I-87036 Arcavacata di Rende, Cosenza, Italy}
\email{francesco.polizzi@unical.it}

\thanks{\emph{2010 Mathematics Subject
Classification.} 14J29, 14J25, 20D15}

\keywords{Surface braid groups, extra-special $p$-groups, Kodaira fibrations}

\begin{document}


\begin{abstract}
We introduce some special presentations on finite groups, that we call \emph{diagonal double Kodaira structures} and whose existence is equivalent to the existence of some special Kodaira fibred surfaces, that we call \emph{diagonal double Kodaira fibrations}.  This allows us to rephrase in  purely algebraic terms some results about finite Heisenberg groups, previously obtained in \cite{CaPol19}, and makes possible to extend them to the case of arbitrary extra-special $p$-groups. 
\end{abstract}

\maketitle





\section{Introduction} \label{sec:intro}

A \emph{Kodaira fibration} is a smooth, connected holomorphic fibration $f_1 \colon S \to B_1$, where $S$ is a compact complex surface and $B_1$ is a compact complex curve, which is not isotrivial (this means that not all its fibres are biholomorphic to each others). The genus $b_1:=g(B_1)$ is called the \emph{base genus} of the fibration, whereas the genus $g:=g(F)$, where $F$ is any fibre, is called the \emph{fibre genus}. If a surface $S$ is the total space of a Kodaira fibration, we will call it a \emph{Kodaira fibred surface}; it is possible to prove that every such a surface is minimal and of general type.

Examples of Kodaira fibrations were originally constructed in \cite{Kod67, At69} in order to show that, unlike the topological Euler characteristic, the signature $\sigma$ of a real manifold is not multiplicative for fibre bundles. In fact, every Kodaira fibred surface $S$ satisfies $\sigma(S) >0$, see for example the introduction of \cite{LLR17}, whereas $\sigma(B_1)=\sigma(F)=0$, and so $\sigma(S) \neq \sigma(B_1)\sigma(F)$.

A \emph{double Kodaira surface} is a compact complex surface $S$, endowed with a \emph{double Kodaira fibration}, namely a surjective, holomorphic map $f \colon S \to B_1 \times B_2$ yielding, by composition with the natural projections, two Kodaira fibrations $f_i \colon S \to B_i$, $i=1, \,2$.

In \cite{CaPol19} the author (in collaboration with A. Causin) introduced a new topological method to construct double Kodaira fibrations, based on the so-called \emph{Heisenberg covers} of $\Sigma_b \times \Sigma_b$, where $\Sigma_b$ denotes a real, closed, connected, orientable surface of genus $b$ (from now on, we will simply write ``a real surface of genus $b$"). These are finite Galois covers $S \to \Sigma_b \times \Sigma_b$, whose branch locus is the diagonal $\Delta \subset \Sigma_b \times \Sigma_b$ and whose Galois group is isomorphic to a finite Heisenberg group. In this note we rephrase the group-cohomological methods of \cite{CaPol19} in a purely algebraic way, by introducing the so-called \emph{diagonal double Kodaira structures} on a finite group $G$, see Definition \ref{def:dks}. These are special presentations of $G$, whose existence is equivalent to the fact that $G$ is a good quotient of some higher genus pure braid group on two strands, where ``good" means that the natural braid called $A_{12}$ in \cite{GG04} has non-trivial image under the quotient map. The existence of diagonal double Kodaira structures yields in turn the existence of some special double Kodaira fibrations, that we call \emph{of diagonal type}, see Definition \ref{def:diagonal-double-kodaira}.  

With this new and compact terminology, we give a short account of some of the main results contained in \cite{CaPol19}, namely
\begin{itemize}
\item the existence of (double) Kodaira fibrations over every curve of genus $b$ (and not only over special curves with extra automorphisms) and the proof that the number of such fibrations over a fixed base can be arbitrarily large, see Theorem \ref{thm:lim-sup};
\item the first ``double solution" to a problem, posed by Geoff Mess, from Kirby's problem list in low-dimensional topology, see Theorem \ref{thm:Kirby};
\item the existence of an infinite family of (double) Kodaira fibrations with slope strictly higher than $3+1/3$, see Theorem \ref{thm:slope} and Remark \ref{rmk:record-slope}.
\end{itemize} 
This paper also contains some new  results, namely 
\begin{itemize}

\item the construction of  diagonal double Kodaira structures on extra-special $p$-groups of any exponent, see Theorems  \ref{thm:extra-special-sdks} and \ref{thm:non-strong}. This extends the equivalent statements for extra-special $p$-groups of exponent $p$ proved in \cite{CaPol19};
\item an explicit upper bound for the slope of a diagonal double Kodaira fibration, see Proposition \ref{prop:slope-diagonal} and Remark \ref{rmk:6-4sqrt2}. 
 \end{itemize} 

An intriguing problem is the existence of diagonal double Kodaira structures on finite groups that are not extra-special or, more generally, on finite groups whose nilpotence class is at least $3$, cf. Remark \ref{rmk:commutators-in-the-center}. However, we will not develop this point here, hoping to come back on it in a sequel to this paper.

\medskip 
\indent \textbf{Acknowledgments.} The author was partially supported by GNSAGA-INdAM. He thanks Andrea Causin for useful comments and remarks. He is also  grateful to the organizers of the 12\emph{th} \emph{ISAAC Congress}, Universidade de Aveiro (2019), and in particular to Alexander Schmitt, for  the invitation and the hospitality. Finally, he is indebted with Geoff Robinson and Derek Holt for their generous help with the proof of Lemma \ref{lem:identify-heisenberg}, see the MathOverflow thread 

\href{https://mathoverflow.net/questions/351163/direct-proof-or-reference-that-a-given-p-group-is-extra-special}{https://mathoverflow.net/questions/351163}, 

\noindent and to 
the anonymous referee for constructive criticism and precise suggestions that considerably improved the presentation of these results.

\medskip
\indent \textbf{Notation and conventions.} 
The order of a finite group $G$ is denoted by $|G|$. If $x \in G$, the order of $x$ is denoted by $o(x)$. The subgroup generated by $x_1, \ldots, x_n \in G$ is denoted by $\langle x_1, \ldots, x_n \rangle$.
The center of $G$ is denoted by $Z(G)$. If $x,y \in G$,
their commutator is defined as $[x,y]=xyx^{-1}y^{-1}$.
We denote both the cyclic group of order $p$ and the field with $p$ elements  by $\mathbb{Z}_p$.


\section{Diagonal double Kodaira structures} \label{sec:dks}

Let $G$ be a finite group and let $b, \, n \geq 2$ be two positive integers.

\begin{definition} \label{def:dks}
A \emph{diagonal double Kodaira structure of type} $(b, \, n)$ on $G$ is a set of  $4b+1$ generators
\begin{equation} \label{eq:dks}
\S = \{\r_{11}, \, \t_{11}, \ldots, \r_{1b}, \, \t_{1b}, \; \r_{21}, \, \t_{21}, \ldots, \r_{2b}, \, \t_{2b}, \; \z \},
\end{equation}
with $o(\z)=n$, such that the following relations are satisfied. We systematically use the commutator notation in order to indicate the conjugacy action, writing for instance $[x, \, y]=zy^{-1}$ instead of $xyx^{-1}=z$.
\begin{itemize}
\item {Surface relations}
\begin{align} \label{eq:presentation-0}
 & [\r_{1b}^{-1}, \, \t_{1b}^{-1}] \, \t_{1b}^{-1} \, [\r_{1 \,b-1}^{-1}, \, \t_{1 \,b-1}^{-1}] \, \t_{1\,b-1}^{-1} \cdots [\r_{11}^{-1}, \, \t_{11}^{-1}] \, \t_{11}^{-1} \, (\t_{11} \, \t_{12} \cdots \t_{1b})=\z \\
& [\r_{21}^{-1}, \, \t_{21}] \, \t_{21} \, [\r_{22}^{-1}, \, \t_{22}] \, \t_{22}\cdots   [\r_{2b}^{-1}, \, \t_{2b}] \, \t_{2b} \, (\t_{2b}^{-1} \, \t_{2 \, b-1}^{-1} \cdots \t_{21}^{-1}) =\z^{-1}
\end{align}
\item {Conjugacy action of} $\r_{1j}$
\begin{align} \label{eq:presentation-1}
[\r_{1j}, \, \r_{2k}]& =1  &  \mathrm{if} \; \; j < k \\
[\r_{1j}, \, \r_{2j}]& = 1 & \\
[\r_{1j}, \, \r_{2k}]& =\z^{-1} \, \r_{2k}\, \r_{2j}^{-1} \, \z \, \r_{2j}\, \r_{2k}^{-1} \; \;&  \mathrm{if} \;  \; j > k \\
& \\
[\r_{1j}, \, \t_{2k}]& =1 & \mathrm{if}\;  \; j < k \\
[\r_{1j}, \, \t_{2j}]& = \z^{-1} & \\
[\r_{1j}, \, \t_{2k}]& =[\z^{-1}, \, \t_{2k}]  & \mathrm{if}\;  \; j > k \\
& \\
[\r_{1j}, \,\z]& =[\r_{2j}^{-1}, \,\z] &
\end{align}
\item {Conjugacy action of} $\t_{1j}$
\begin{align} \label{eq:presentation-3}
[\t_{1j}, \, \r_{2k}]& =1 & \mathrm{if}\;  \; j < k \\
[\t_{1j}, \, \r_{2j}]& = \t_{2j}^{-1}\, \z\, \t_{2j} & \\
[\t_{1j}, \, \r_{2k}]& =[\t_{2j}^{-1},\, \z] \; \; & \mathrm{if}  \;\; j > k \\
& \\
[\t_{1j}, \, \t_{2k}]& =1 & \mathrm{if}\; \; j < k \\
[\t_{1j}, \, \t_{2j}]& = [\t_{2j}^{-1}, \, \z] & \\
[\t_{1j}, \, \t_{2k}]& =\t_{2j} ^{-1}\,\z\, \t_{2j}\, \z^{-1}\, \t_{2k}\,\z\, \t_{2j} ^{-1}\,\z^{-1}\, \t_{2j}\,\t_{2k}^{-1} \; \;  & \mathrm{if}\;  \; j > k \\
&  \\
[\t_{1j}, \,\z]& =[\t_{2j}^{-1}, \,\z] &
\end{align}
\end{itemize}

\begin{remark} \label{inverse-actions}
From \eqref{eq:presentation-1} and \eqref{eq:presentation-3} we can deduce the corresponding conjugacy actions of $\r_{1j}^{-1}$ and $\t_{1j}^{-1}$. We leave the cumbersome but standard computations to the reader.
\end{remark}

\begin{remark} \label{rmk:no-abelian-dks}
Abelian groups admit no diagonal double Kodaira structures. Indeed, the relation $[\r_{1j}, \, \t_{2j}]=\z^{-1}$  in \eqref{eq:presentation-1} provides a non-trivial commutator in $G$, because $o(\z)=n$.
\end{remark}
\end{definition}

\begin{remark} \label{rmk:commutators-in-the-center}
Assume that the commutator subgroup $[G, \ G]$ is contained in the center $Z(G)$, i.e., that $G/Z(G)$ is abelian (being $G$ non-abelian, this is equivalent to the fact that $G$ has nilpotence class $2$, see \cite[p. 22]{Is08}). Then the relations defining a diagonal double Kodaira structure on $G$ assume the following simplified form. 
\begin{itemize}
\item Relations expressing the centrality of $\z$ 
\begin{equation} \label{eq:presentation-01-simple}
[\r_{1j}, \z]=[\t_{1j}, \z]=[\r_{2j}, \z]=[\t_{2j}, \z]=1
\end{equation}
\item {Surface relations}
\begin{align} \label{eq:presentation-0-simple}
 & [\r_{1b}^{-1}, \, \t_{1b}^{-1}] \, [\r_{1 \,b-1}^{-1}, \, \t_{1 \,b-1}^{-1}] \, \cdots [\r_{11}^{-1}, \, \t_{11}^{-1}] \,=\z \\
& [\r_{21}^{-1}, \, \t_{21}] \,  [\r_{22}^{-1}, \, \t_{22}] \cdots   [\r_{2b}^{-1}, \, \t_{2b}] =\z^{-1}
\end{align}
\item {Conjugacy action of} $\r_{1j}$
\begin{align} \label{eq:presentation-1-simple}
[\r_{1j}, \, \r_{2k}]& =1  &  \mathrm{for \; all} \; \; j, \, k \\
[\r_{1j}, \, \t_{2k}]& = \z^{-\delta_{jk}} & \\
\end{align}
\item {Conjugacy action of} $\t_{1j}$
\begin{align} \label{eq:presentation-3-simple}
[\t_{1j}, \, \r_{2k}]& = \z^{\delta_{jk}} & \\
[\t_{1j}, \, \t_{2k}]& =1 & \mathrm{for \; all} \; \; j, \, k \\
\end{align}
\end{itemize}
where $\delta_{jk}$ stands for the Kronecker symbol. 
\end{remark}
\bigskip
If $\S$ is a diagonal double Kodaira structure of type $(b, \, n)$ on $G$, then the subgroup 
\begin{equation} \label{eq:K1}
K_2:=\langle  \r_{21}, \, \t_{21}, \ldots, \r_{2b}, \, \t_{2b}, \; \z  \rangle
\end{equation}
is normal in $G$ and so there is a short exact sequence 
\begin{equation}
1 \to K_2 \to G \to Q_1 \to 1,
\end{equation}
where the elements $\r_{11}, \, \t_{11}, \ldots, \r_{1b}, \, \t_{1b}$ yield a complete set of representatives for $Q_1$. On the other hand, the set of relations defining $\S$ is invariant under the involution
\begin{equation} \label{eq:substitutions-S}
\z \longleftrightarrow \z^{-1}, \quad \t_{1j} \longleftrightarrow \t_{2\; \; b+1-j}^{-1}, \quad  \r_{1j} \longleftrightarrow \r_{2\;\; b+1-j},
\end{equation}
hence we can also see $G$ as the middle term of a short exact sequence
\begin{equation}
1 \to K_1 \to G \to Q_2 \to 1,
\end{equation}
where
\begin{equation*}
K_1:=\langle  \r_{11}, \, \t_{11}, \ldots, \r_{1b}, \, \t_{1b}, \; \z  \rangle
\end{equation*}
and $\r_{21}, \, \t_{21}, \ldots, \r_{2b}, \, \t_{2b}$ yield a complete set of representatives for $Q_2$.

\begin{definition} \label{def:sdks}
A diagonal double Kodaira structure $\S$ as above will be called \emph{of strong type} $(b, \, n)$ if $K_1=K_2=G$. Otherwise, it will be called \emph{of non-strong type} $(b, \, n)$.
\end{definition}
Sometimes we will not specify the pair $(b, \, n)$, and we will simply say that $\S$ is ``of strong type" or ``of non-strong type", respectively. 


\section{The case of extra-special $p$-groups} \label{sec:extra-special}

The following classical definition can be found, for instance, in \cite[p. 183]{Gor07}
and \cite[p. 123]{Is08}.
\begin{definition} \label{def:extra-special}
Let $p$ be a prime number. A finite $p$-group $G$ is called \emph{extra-special} if its center $Z(G)$ is cyclic of order $p$ and the quotient $V=G/Z(G)$ is a non-trivial, elementary abelian $p$-group.
\end{definition}

An elementary abelian $p$-group is a finite-dimensional vector space over the field $\mathbb{Z}_p$, hence it is of the form $V=(\mathbb{Z}_p)^{\dim V}$ and $G$ fits into a short exact sequence
\begin{equation} \label{eq:extension-extra}
1 \to \mathbb{Z}_p \to G \to V \to 1.
\end{equation}
Note that, $V$ being abelian, we must have $[G, \, G]=\mathbb{Z}_p$, namely the commutator subgroup of $G$ coincides with its center. Furthermore, since the extension \eqref{eq:extension-extra} is central, it cannot be split, otherwise $G$ would be isomorphic to the direct product of the two abelian groups $\mathbb{Z}_p$ and $V$, which is impossible because $G$ is non-abelian.  It can be also proved that, if $G$ is extra-special, then  $\dim V$ is even, so $|G|=p^{\dim V +1}$ is an odd power of $p$.  


For every prime number $p$, there are precisely two isomorphism classes $M(p)$, $N(p)$ of non-abelian groups of order $p^3$, namely
\begin{equation*}
\begin{split}
M(p)& = \langle \r, \, \t, \, \z \; | \; \r^p=\t^p=1, \, \z^p=1, [\r, \, \z]=[\t,\, \z]=1, \, [\r, \, \t]=\z^{-1} \rangle \\
N(p)& = \langle \r, \, \t, \, \z \; | \; \r^p=\t^p=\z, \, \z^p=1, [\r, \, \z]=[\t,\, \z]=1, \, [\r, \, \t]=\z^{-1} \rangle \\
\end{split}
\end{equation*}
and both of them are in fact extra-special, see \cite[Theorem 5.1 of Chapter 5]{Gor07}.

If $p$ is odd, then the groups $M(p)$ and $N(p)$ are distinguished by their exponent, which equals $p$ and $p^2$, respectively. If $p=2$, the group $M(p)$ is isomorphic to the dihedral group $D_8$, whereas $N(p)$ is isomorphic to the quaternion group $Q_8$. 

The classification of extra-special $p$-groups is provided by the result below, see \cite[Section 5 of Chapter 5]{Gor07}.
\begin{proposition} \label{prop:extra-special-groups}
If $b \geq 2$ is a positive integer and $p$ is a prime number, there are exactly two isomorphism classes of extra-special $p$-groups of order $p^{2b+1}$, that can be described as follows.  
\begin{itemize}
\item The central product $\mathsf{H}_{2b+1}(\mathbb{Z}_p)$ of $b$ copies of $M(p)$, having presentation
\begin{equation}
\begin{split}
\mathsf{H}_{2b+1}(\mathbb{Z}_p) = \langle \, & \mathsf{r}_1, \, \mathsf{t}_1, \ldots, \mathsf{r}_b,\, \mathsf{t}_b, \, \z \; |  \; \mathsf{r}_{j}^p = \mathsf{t}_{j}^p=\mathsf{z}^p=1, \\
& [\mathsf{r}_{j}, \, \mathsf{z}]  = [\mathsf{t}_{j}, \, \mathsf{z}]= 1, \\
& [\mathsf{r}_j, \mathsf{r}_k]= [\mathsf{t}_j, \mathsf{t}_k] = 1, \\
& [\mathsf{r}_{j}, \,\mathsf{t}_{k}] =\mathsf{z}^{- \delta_{jk}} \, \rangle.
\end{split}
\end{equation}
If $p$ is odd, this group has exponent $p$. 
\item The central product $\mathsf{G}_{2b+1}(\mathbb{Z}_p)$ of $b-1$ copies of $M(p)$ and one copy of $N(p)$,  having presentation
\begin{equation}
\begin{split}
\mathsf{G}_{2b+1}(\mathbb{Z}_p) = \langle \, & \mathsf{r}_1, \, \mathsf{t}_1, \ldots, \mathsf{r}_b,\, \mathsf{t}_b, \, \z \; | \;  \mathsf{r}_{b}^p = \mathsf{t}_{b}^p=\mathsf{z}, \\ 
&\mathsf{r}_{1}^p = \mathsf{t}_{1}^p= \ldots = \mathsf{r}_{b-1}^p = \mathsf{t}_{b-1}^p=\mathsf{z}^p=1, \\
& [\mathsf{r}_{j}, \, \mathsf{z}]  = [\mathsf{t}_{j}, \, \mathsf{z}]= 1, \\
& [\mathsf{r}_j, \mathsf{r}_k]= [\mathsf{t}_j, \mathsf{t}_k] = 1, \\
& [\mathsf{r}_{j}, \,\mathsf{t}_{k}] =\mathsf{z}^{- \delta_{jk}} \, \rangle.
\end{split}
\end{equation}
If $p$ is odd, this group has exponent $p^2$. 
 \end{itemize}
\end{proposition}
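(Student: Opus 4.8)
Since the classification is classical, the natural strategy is to translate the group structure into linear algebra on $V := G/Z(G)$ and to read off the invariants there. The plan begins with \emph{existence}: I would first check that the two listed groups really are extra-special of order $p^{2b+1}$. Each is a central product, amalgamating the common central $\langle \z \rangle \cong \mathbb{Z}_p$, of $b$ non-abelian groups of order $p^3$ drawn from the pair $\{M(p), N(p)\}$ described just before the statement. Since $|A \circ B| = |A|\,|B|/p$ for such a central product, an iterated product of $b$ blocks has order $p^{2b+1}$; its centre remains $\langle \z \rangle$ (this uses non-degeneracy of the commutator pairing, below) and its quotient by the centre is elementary abelian, so the product is extra-special. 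Reading off the relations of the blocks gives exactly the two displayed presentations, and for $p$ odd the resulting exponents are $p$ and $p^2$.

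For \emph{uniqueness}, let $G$ be an arbitrary extra-special $p$-group of order $p^{2b+1}$, fix a generator $\z$ of $Z(G) \cong \mathbb{Z}_p$, and recall that $\dim_{\mathbb{Z}_p} V = 2b$. Because $[G,G] = Z(G)$ and $G$ has nilpotence class $2$, the commutator descends to a well-defined map $V \times V \to \mathbb{Z}_p$, $(\bar x, \bar y) \mapsto e$ with $[x,y] = \z^{e}$; this form is bilinear, alternating, and non-degenerate precisely because $Z(G)$ is the full centre. Over $\mathbb{Z}_p$ every non-degenerate alternating form admits a symplectic basis, so I would choose lifts $\r_1, \t_1, \ldots, \r_b, \t_b \in G$ realising $[\r_j, \t_k] = \z^{-\delta_{jk}}$ and $[\r_j, \r_k] = [\t_j, \t_k] = 1$. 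These are the commutator relations common to both target presentations, and the subgroups $\langle \r_j, \t_j, \z \rangle$ are then non-abelian of order $p^3$, exhibiting $G$ as their central product.

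The remaining invariant is the $p$-th power map. For $p$ odd the class-$2$ identity $(xy)^p = x^p y^p [y,x]^{\binom p 2}$, together with $\binom p 2 \equiv 0 \pmod p$ and $o(\z) = p$, shows that $x \mapsto x^p$ is a homomorphism $G \to Z(G)$ vanishing on $Z(G)$, hence descending to a linear functional $q \colon V \to \mathbb{Z}_p$. Either $q \equiv 0$, forcing exponent $p$, or $q \neq 0$, forcing exponent $p^2$; in particular the two outcomes are non-isomorphic. Using the identification $V \cong V^*$ induced by the symplectic form together with the transitivity of $\mathrm{Sp}(2b, \mathbb{Z}_p)$ on nonzero vectors, a nonzero $q$ can be normalised so that only $\r_b$ and $\t_b$ acquire a nontrivial $p$-th power; adjusting these two lifts by central elements then recovers the presentation of $\mathsf{G}_{2b+1}(\mathbb{Z}_p)$, while $q \equiv 0$ recovers $\mathsf{H}_{2b+1}(\mathbb{Z}_p)$.

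The step I expect to be the real obstacle is making this normalisation rigorous, which, phrased through central products, rests on the isomorphism $M(p) \circ M(p) \cong N(p) \circ N(p)$ valid for $p$ odd: it lets one trade any two $N(p)$-blocks for two $M(p)$-blocks, so that only the parity of the number of $N(p)$-factors survives as an invariant, yielding exactly two classes. Establishing this isomorphism by an explicit generator correspondence, and verifying that the central adjustments of the $\r_j, \t_j$ can standardise $q$ without disturbing the commutator relations, is the technical heart. The prime $p = 2$ needs separate handling, since there $\binom 2 2 = 1$ and squaring is no longer a homomorphism: one replaces $q$ by the quadratic form $\bar x \mapsto x^2$ and distinguishes $D_8^{\circ b}$ from $D_8^{\circ (b-1)} \circ Q_8$ by its Arf invariant rather than by the exponent. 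As the statement is classical, in the paper itself I would simply refer to \cite[Section 5 of Chapter 5]{Gor07} for the complete argument.
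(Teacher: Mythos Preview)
The paper does not prove this proposition at all: it is stated as a classical result with the attribution ``see \cite[Section 5 of Chapter 5]{Gor07}'' and no proof environment follows. Your final sentence anticipates this exactly, so your proposal matches the paper's treatment; the preceding sketch you give is a correct outline of the standard argument found in Gorenstein (symplectic form on $V=G/Z(G)$, normalisation of the $p$-th power map for $p$ odd, Arf invariant for $p=2$), but none of it appears in the paper itself.
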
  

\begin{remark} \label{rmk:further-commutators}
In both cases, from the relations above we deduce
\begin{equation} \label{eq:further-commutators}
[\mathsf{r}_j^{-1}, \, \mathsf{t}_k]=\mathsf{z}^{\delta_{jk}}, \quad [\mathsf{r}_j^{-1}, \, \mathsf{t}_k^{-1}]=\mathsf{z}^{-\delta_{jk}}.
\end{equation}
\end{remark}

\begin{remark} \label{rmk:center-H-G}
For both groups $\mathsf{H}_{2b+1}(\mathbb{Z}_p)$ and $\mathsf{G}_{2b+1}(\mathbb{Z}_p)$, the center is $\langle  \z  \rangle \simeq \mathbb{Z}_p$.
\end{remark}

\begin{remark}
If $p=2$, we can distinguish the two groups $\mathsf{H}_{2b+1}(\mathbb{Z}_p)$ and $\mathsf{G}_{2b+1}(\mathbb{Z}_p)$ by counting the number of elements of order $4$.
\end{remark}

\begin{remark}
The group $\mathsf{H}_{2b+1}(\mathbb{Z}_p)$ is isomorphic to the  \emph{matrix Heisenberg group} of order $p^{2b+1}$, that is, the subgroup of $\mathrm{GL}_{b+2}(\mathbb{Z}_p)$ consisting of matrices with $1$ along the diagonal and $0$ elsewhere, except for the top row and rightmost column, namely
\begin{equation} \label{eq:heis-matrices}
\mathsf{H}_{2b+1}(\mathbb{Z}_p) = \left\{
\begin{pmatrix} 1 & \mathbf{x} & z\\
{}^{\mathrm{t}} \mathbf{0} &  I_b & {}^{\mathrm{t}}\mathbf{y}  \\
0 & \mathbf{0} & 1 \\
\end{pmatrix} \; \; \bigg|  \; \; \mathbf{x}, \, \mathbf{y} \in (\mathbb{Z}_p)^b, \, z \in \mathbb{Z}_p
  \right\}.
\end{equation}
With this identification, calling $\{\mathbf{e}_1, \ldots, \mathbf{e}_b\}$ the standard basis of $(\mathbb{Z}_p)^b$, we have that:  
\begin{itemize}
\item[-] $\mathsf{r}_j$ corresponds to the matrix with $\mathbf{x}=\mathbf{0}$, $\mathbf{y}=\mathbf{e}_j$, $z=0$;
\item[-] $\mathsf{t}_j$ corresponds to the matrix with $\mathbf{x}=\mathbf{e}_j$, $\mathbf{y}=\mathbf{0}$, $z=0$; 
\item[-] $\mathsf{z}$ corresponds to the matrix with $\mathbf{x}= \mathbf{0}$, $\mathbf{y} = \textbf{0}$, $z=1$.
\end{itemize}
\end{remark}
Here is our first main result, cf. \cite[Section 3]{CaPol19}.

\begin{theorem} \label{thm:extra-special-sdks}
Let $b \geq 2$ be a positive integer and let $p$ be a prime number. If $p$ divides $b+1$, then every extra-special $p$-group $G$ of order $p^{2b+1}$ admits a diagonal double Kodaira structure of strong type $(b, \, p)$.
\end{theorem}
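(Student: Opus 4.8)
The plan is to exploit that an extra-special $p$-group has nilpotence class $2$, so the defining relations collapse to the simplified form of Remark \ref{rmk:commutators-in-the-center}, and then to build the structure \emph{diagonally} from the standard generators supplied by Proposition \ref{prop:extra-special-groups}.

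First I would record that, for an extra-special $p$-group $G$ of order $p^{2b+1}$, Remark \ref{rmk:center-H-G} gives $[G,\,G]=Z(G)=\langle \z \rangle \simeq \ZZ_p$, so $G/Z(G)$ is abelian and $G$ has nilpotence class $2$. Hence it suffices to exhibit $4b+1$ generators satisfying the reduced relations \eqref{eq:presentation-01-simple}, \eqref{eq:presentation-0-simple}, \eqref{eq:presentation-1-simple}, \eqref{eq:presentation-3-simple}. Fixing the presentation of $G$ from Proposition \ref{prop:extra-special-groups} with generators $\r_1, \t_1, \ldots, \r_b, \t_b, \z$, I would set
\[
\r_{1j}=\r_{2j}:=\r_j, \qquad \t_{1j}=\t_{2j}:=\t_j \qquad (1 \le j \le b),
\]
keeping $\z$ as the central element of order $p=n$. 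Note that this prescription is insensitive to the $p$-th power relations, so it treats $\mathsf{H}_{2b+1}(\ZZ_p)$ and $\mathsf{G}_{2b+1}(\ZZ_p)$ simultaneously, which accounts for the assertion that \emph{every} extra-special $p$-group of the given order qualifies.

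Next I would check the relations. The centrality relations \eqref{eq:presentation-01-simple} are immediate since $\z$ generates $Z(G)$, and the conjugacy relations \eqref{eq:presentation-1-simple}, \eqref{eq:presentation-3-simple} follow at once from $[\r_j,\r_k]=[\t_j,\t_k]=1$ and $[\r_j,\t_k]=\z^{-\delta_{jk}}$, using bilinearity of the commutator in a class-$2$ group (in particular $[\t_j,\r_k]=[\r_k,\t_j]^{-1}=\z^{\delta_{jk}}$). The only substantial point is the surface relations \eqref{eq:presentation-0-simple}. By Remark \ref{rmk:further-commutators} one has $[\r_j^{-1},\t_j^{-1}]=\z^{-1}$ and $[\r_j^{-1},\t_j]=\z$, and since these factors are central the two products reduce to
\[
[\r_b^{-1},\t_b^{-1}]\cdots[\r_1^{-1},\t_1^{-1}]=\z^{-b}, \qquad [\r_1^{-1},\t_1]\cdots[\r_b^{-1},\t_b]=\z^{b}.
\]
These equal $\z$ and $\z^{-1}$ respectively if and only if $\z^{b+1}=1$, i.e.\ (since $o(\z)=p$) if and only if $p \mid b+1$. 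This is the crux of the argument and the single place where the numerical hypothesis is used.

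Finally, the structure is automatically of strong type: because $\r_1,\t_1,\ldots,\r_b,\t_b,\z$ already generate $G$, both $K_1=\langle \r_{11},\t_{11},\ldots,\r_{1b},\t_{1b},\z \rangle$ and $K_2=\langle \r_{21},\t_{21},\ldots,\r_{2b},\t_{2b},\z \rangle$ coincide with $G$, so $K_1=K_2=G$. I do not anticipate a genuine obstacle beyond the bookkeeping of the commutator identities; the whole content reduces to the equivalence $\z^{-b}=\z \Leftrightarrow p \mid b+1$, with the diagonal choice essentially forced by the requirement that $[\r_{1j},\t_{2k}]=\z^{-\delta_{jk}}$ while $[\r_{1j},\r_{2k}]=[\t_{1j},\t_{2k}]=1$.
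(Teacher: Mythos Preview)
Your proof is correct and follows essentially the same approach as the paper: the identical diagonal assignment $\r_{1j}=\r_{2j}=\r_j$, $\t_{1j}=\t_{2j}=\t_j$, reduction to the simplified relations of Remark~\ref{rmk:commutators-in-the-center} via nilpotence class~$2$, and the observation that the surface relations amount to $\z^{-b}=\z$ (equivalently $b\equiv -1\pmod p$). Your write-up is actually slightly more explicit than the paper's in spelling out the surface-relation computation and the verification that $K_1=K_2=G$.
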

\begin{proof}
In both cases $G=\mathsf{H}_{2b+1}(\mathbb{Z}_p)$ and $G=\mathsf{G}_{2b+1}(\mathbb{Z}_p)$, set
\begin{equation*}
\r_{1j}=\r_{2j}:=\r_j, \quad \t_{1j}=\t_{2j}:=\t_j
\end{equation*}
and define 
\begin{equation*}
\S = \{\r_{11}, \, \t_{11}, \ldots, \r_{1b}, \, \t_{1b}, \; \r_{21}, \, \t_{21}, \ldots, \r_{2b}, \, \t_{2b}, \; \z \}. 
\end{equation*}
Since every extra-special $p$-group $G$ satisfies $[G, \, G]= Z(G)$, it suffices to check the simplified set of relations given in Remark \ref{rmk:commutators-in-the-center}. Verifying  \eqref{eq:presentation-01-simple},  \eqref{eq:presentation-1-simple} and \eqref{eq:presentation-3-simple} is immediate from the presentation of $G$ (see Proposition \ref{prop:extra-special-groups}), whereas the surface relations \eqref{eq:presentation-0-simple} follow from \eqref{eq:further-commutators} because, by assumption, we have $b=-1$ in $\mathbb{Z}_p$. Thus $\S$ provides a diagonal double Kodaira structure on $G$, that is of strong type by construction.
\end{proof}

Our next goal is to show that, if in Theorem \eqref{thm:extra-special-sdks} we drop the condition that $p$ divides $b+1$, we can still obtain some  diagonal double Kodaira structures \emph{of non-strong type} on extra-special $p$-groups of (bigger) order $p^{4b+1}$. Let us first show a couple of technical lemmas. 

\begin{lemma} \label{lem:technical}
If $b \geq 2$ is an integer and $p \geq 5$ is a prime number, we can find non-zero elements
\begin{equation}
\lambda_1, \ldots, \lambda_b, \, \mu_1, \ldots, \mu_b \in \mathbb{Z}_p
\end{equation}
such that 
\begin{equation} \label{eq:technical}
\sum_{j=1}^b \lambda_j=\sum_{j=1}^b \mu_j=1
\end{equation}
and $\lambda_j \mu_j \neq 1$ for all $j \in \{1, \ldots, b\}$.
\end{lemma}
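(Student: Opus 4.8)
The plan is to decouple the two sequences by simply setting $\mu_j := \lambda_j$ for every $j$. Under this choice the constraint $\lambda_j \mu_j \neq 1$ becomes $\lambda_j^2 \neq 1$, i.e. $\lambda_j \notin \{1, -1\}$, and the two sum conditions collapse into the single requirement $\sum_{j=1}^b \lambda_j = 1$. Thus it suffices to produce $\lambda_1, \ldots, \lambda_b$ in the set $T := \mathbb{Z}_p \setminus \{0, 1, -1\}$ whose sum equals $1$. Since $p \geq 5$, note that $|T| = p - 3 \geq 2$ and that $2 \in T$; these are the only features of $T$ I shall use.

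The core of the argument is the case $p \geq 7$, which I would settle by a ``two free coordinates'' trick. Fix $\lambda_3 = \cdots = \lambda_b = 2$ (vacuously if $b = 2$) and set $\tau := 1 - 2(b-2)$, so that the problem reduces to finding $\lambda_1, \lambda_2 \in T$ with $\lambda_1 + \lambda_2 = \tau$. Writing $\lambda_2 = \tau - \lambda_1$, the element $\lambda_1$ must avoid $\{0, 1, -1\}$ (so that $\lambda_1 \in T$) together with $\{\tau, \tau - 1, \tau + 1\}$ (so that $\lambda_2 \in T$). This forbidden set has at most six elements, so the hypothesis $p \geq 7$ guarantees an admissible $\lambda_1$, and hence the desired tuple.

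It remains to treat $p = 5$ separately, where $T = \{2, 3\}$ consists of only two elements and the counting above is too crude. Here I would argue explicitly: taking $m$ of the $\lambda_j$ equal to $2$ and the remaining $b - m$ equal to $3$ gives $\sum_{j=1}^b \lambda_j = 3b - m$, so it is enough to choose $m \in \{0, \ldots, b\}$ with $m \equiv 3b - 1 \pmod 5$. For $b \geq 4$ the range $\{0, \ldots, b\}$ meets every residue class modulo $5$; the two small cases reduce to $m = 0$ when $b = 2$ (giving $3 + 3 = 1$) and $m = 3$ when $b = 3$ (giving $2 + 2 + 2 = 1$).

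I expect the only genuine obstacle to be this small-prime endpoint: the hypothesis $p \geq 5$ is sharp precisely because, as $p$ shrinks, $T$ becomes tiny and the set of attainable sums of $b$ elements of $T$ grows slowly, so full coverage of $\mathbb{Z}_p$ (and in particular hitting the target $1$) can fail for small $b$. One could instead unify the two regimes by invoking the Cauchy--Davenport inequality, which gives $|\, T + \cdots + T \,| \geq \min(p,\, b(p-4)+1)$ for the $b$-fold sumset and hence $T + \cdots + T = \mathbb{Z}_p$ as soon as $b(p-4)+1 \geq p$; this covers all $b \geq 2$ when $p \geq 7$ and all $b \geq 4$ when $p = 5$, leaving only the same two tiny cases to check by hand.
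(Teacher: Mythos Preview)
Your proof is correct, but it follows a different strategy from the paper's.

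The paper keeps the two sequences independent: it fixes $\lambda_1,\ldots,\lambda_{b-1}$ (hence $\lambda_b$) and $\mu_1,\ldots,\mu_{b-2}$ more or less freely, and then observes that the remaining pair $(\mu_{b-1},\mu_b)$ lies on a line in $(\mathbb{Z}_p)^2$ from which at most two points must be excluded; since $p\geq 5$ there are at least three non-zero choices left, and the argument finishes uniformly in $p$. Your approach instead collapses the problem to the diagonal $\mu_j=\lambda_j$, which is a genuine simplification---one sequence rather than two---at the cost of shrinking the admissible set from $\mathbb{Z}_p\setminus\{0\}$ to $T=\mathbb{Z}_p\setminus\{0,1,-1\}$. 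This forces the case split at $p=5$ (where $|T|=2$), which you handle cleanly by an explicit residue count. The paper's two-degrees-of-freedom trick avoids that split; your diagonal reduction makes the underlying combinatorics more transparent and connects naturally to sumset bounds such as Cauchy--Davenport. Both arguments are short and elementary, and neither dominates the other.
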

\begin{proof}
The following simple argument is borrowed from \cite[proof of Proposition 2.16]{CaPol19}. Choose arbitrarily $\lambda_j$, with $j \in\{1,\ldots,b-1\}$, and $\mu_j$,  with $j \in\{1,\ldots,b-2\}$, such that $\lambda_j\mu_j\ne 1$ for all $j \in\{1,\ldots,b-2\}$. Then $\lambda_b$ is uniquely determined by $\lambda_b=1-\sum_{j=1}^{b-1}\lambda_j$, whereas  $\mu_{b-1}$ and $\mu_b$ are subject to the following conditions:
\begin{itemize}
\item $\mu_{b-1}+\mu_b$ is equal to a constant $c=1-\sum_{j=1}^{b-2}\mu_j$ 
\item $\mu_{b-1}\ne\lambda_{b-1}^{-1}$, $\mu_{b}\ne\lambda_{b}^{-1}$.
\end{itemize}
These requirements are in turn equivalent to $\mu_{b-1}\notin \{\lambda_{b-1}^{-1},\,c-\lambda_b^{-1}\}$. If $p \geq 5$ this can be clearly satisfied, because there are more than two non-zero elements in $\mathbb Z_p$.
\end{proof}
Now, take any anti-symmetrix matrix $A=(a_{jk})$ of order $2n$ over $\mathbb{Z}_p$, and consider the finitely presented groups
\begin{equation} \label{eq:H(A)}
\begin{split}
\mathsf{H}(A) = \langle \,  \mathsf{x}_1, \ldots, \mathsf{x}_{2n}, \, \mathsf{z} \; | \; & \mathsf{x}_{1}^p = \ldots =\mathsf{x}_{2n}^p=\mathsf{z}^p=1, \\ 
& [\x_1, \, \z] = \ldots = [\x_{2n}, \, \z]=1,\\
& [\x_j, \, \x_k] =\z^{a_{jk}} \, \rangle,
\end{split}
\end{equation}

\begin{equation} \label{eq:G(A)}
\begin{split}
\mathsf{G}(A) = \langle \,  \mathsf{x}_1, \ldots, \mathsf{x}_{2n}, \, \mathsf{z} \; | \; & \mathsf{x}_{1}^p  = \ldots =\mathsf{x}_{2n-2}^p=\mathsf{z}^p=1, \\ 
& \x_{2n-1}^p  =\x_{2n}^p= \z, \\
 & [\x_1, \, \z]  = \ldots = [\x_{2n}, \, \z]=1,\\
& [\x_j, \, \x_k]  =\z^{a_{jk}} \, \rangle,
\end{split}
\end{equation}
where the exponent in $\z^{a_{jk}}$ stands for any representative in $\mathbb{Z}$ of $a_{jk} \in \mathbb{Z}_p$.
\medskip

Recall that, given three elements $a, \, b, \, c$ in a group $G$, we have the commutator relation $[a, \, bc]=[a, \, b] b [a, \, c] b^{-1}$. Since  all commutators in $\mathsf{H}(A)$ are central, we get 
\begin{equation} \label{eq:central-in-H(A)}
[a, \, bc]=[a, \,b]  [a, \, c] \quad \textrm{for all} \; a, \, b, \, c \in \mathsf{H}(A),
\end{equation}
and similarly for $\mathsf{G}(A)$.
\begin{lemma} \label{lem:identify-heisenberg}
If $\det A \neq 0$, then the following holds$:$
\begin{itemize}
\item $\mathsf{H}(A)$ is an extra-special $p$-group of order $p^{2n+1}$ and exponent $p$. In particular, it is isomorphic to $\mathsf{H}_{2n+1}(\mathbb{Z}_p)$;
\item $\mathsf{G}(A)$ is an extra-special $p$-group of order $p^{2n+1}$ and exponent $p^2$. In particular, it is isomorphic to $\mathsf{G}_{2n+1}(\mathbb{Z}_p)$.
\end{itemize}
\end{lemma}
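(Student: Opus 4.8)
The plan is to exhibit an explicit isomorphism in each case by building on a single group-theoretic core. First I would argue that, since $\det A \neq 0$ over $\mathbb{Z}_p$, the relations force both $\mathsf{H}(A)$ and $\mathsf{G}(A)$ to be $p$-groups whose center contains $\langle \z \rangle \simeq \mathbb{Z}_p$ and whose commutator subgroup equals $\langle \z \rangle$. The key computation is that every element can be written in a normal form $\x_1^{c_1} \cdots \x_{2n}^{c_{2n}} \z^{c}$ with exponents in $\{0, \ldots, p-1\}$: one uses the centrality of all commutators, encoded in \eqref{eq:central-in-H(A)}, to push all occurrences of $\z$ to the right and to reorder the generators, each transposition contributing a central factor $\z^{\pm a_{jk}}$. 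This immediately gives $|\mathsf{H}(A)| \le p^{2n+1}$ and $|\mathsf{G}(A)| \le p^{2n+1}$.

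Next I would establish that these orders are exactly $p^{2n+1}$ and that $\z$ has order exactly $p$ (rather than being trivial), which is where nondegeneracy of $A$ enters. The cleanest route is to realize $\langle \z \rangle$ as a nontrivial quotient target: using $\det A \neq 0$, I would produce, for each pair of indices, a representation (or a quotient onto a Heisenberg group of rank $2n+1$) under which the images of the $\x_j$ generate a nonabelian group with $\z \mapsto$ a generator of the center. Concretely, the matrix Heisenberg description \eqref{eq:heis-matrices} provides a surjection whose kernel one checks is trivial by comparing the two order bounds; nondegeneracy of $A$ guarantees that the induced symplectic form on $V = G/\langle \z \rangle$ is nondegenerate, so $V \simeq (\mathbb{Z}_p)^{2n}$ and the center is no larger than $\langle \z \rangle$. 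Once $|G| = p^{2n+1}$ with $Z(G) = \langle \z \rangle \simeq \mathbb{Z}_p$ and $V = G/Z(G)$ elementary abelian, Definition \ref{def:extra-special} shows $G$ is extra-special, and Proposition \ref{prop:extra-special-groups} leaves only two isomorphism types.

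To distinguish the two types I would compute exponents, which is exactly what separates $\mathsf{H}_{2n+1}(\mathbb{Z}_p)$ from $\mathsf{G}_{2n+1}(\mathbb{Z}_p)$ when $p$ is odd. For $\mathsf{H}(A)$, the relations $\x_j^p = 1$ together with the normal form and the fact that raising a normal-form word to the $p$-th power collects central commutator terms with a binomial coefficient $\binom{p}{2}$ (divisible by $p$ for $p$ odd) show every element has order dividing $p$, so the exponent is $p$ and hence $\mathsf{H}(A) \simeq \mathsf{H}_{2n+1}(\mathbb{Z}_p)$. For $\mathsf{G}(A)$, the modified relations $\x_{2n-1}^p = \x_{2n}^p = \z$ produce an element of order $p^2$, forcing exponent $p^2$ and hence $\mathsf{G}(A) \simeq \mathsf{G}_{2n+1}(\mathbb{Z}_p)$.

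The main obstacle I anticipate is the lower bound on the order, i.e. proving the presentation does not collapse $\z$ or identify distinct normal forms. Establishing $\le p^{2n+1}$ is routine collection, but ruling out further relations requires genuinely using $\det A \neq 0$; the honest way is to construct an explicit group of order $p^{2n+1}$ satisfying the relations (for instance a concrete central extension of $(\mathbb{Z}_p)^{2n}$ by $\mathbb{Z}_p$ with cocycle determined by $A$, or the matrix model after a symplectic change of basis diagonalizing $A$ into standard hyperbolic blocks), and then invoke the normal-form upper bound to conclude the surjection onto it is an isomorphism. This is precisely the step for which the referenced MathOverflow discussion was consulted, so I expect the subtlety to lie there rather than in the exponent computation.
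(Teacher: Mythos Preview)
Your proposal is correct and follows essentially the same line as the paper: a normal form to bound the order by $p^{2n+1}$, the quotient by $\langle \z \rangle$ identified with $(\mathbb{Z}_p)^{2n}$, and the nondegeneracy of $A$ to pin down the center as exactly $\langle \z \rangle$ (the paper carries out this last step by the direct linear-system computation $[\x_k,\,\x_1^{t_1}\cdots\x_{2n}^{t_{2n}}]=\z^{\sum_j a_{kj}t_j}$, which is your ``induced symplectic form is nondegenerate'' in coordinates). You are more explicit than the paper about the lower bound on the order---the paper does not separately argue that $\z\neq 1$ via a model---and your $\binom{p}{2}$ argument for the exponent (with the caveat $p$ odd) is more careful than the paper's one-line remark, but the overall strategy is the same.
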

\begin{proof}
We prove only the first point, the second being similar. The commutator relations in \eqref{eq:H(A)} show that every element of $\mathsf{H}(A)$ can be written in the form $\mathsf{x}_1^{t_1} \ldots \mathsf{x}_{2n}^{t_{2n}}$, with $t_1,\ldots, t_{2n} \in \mathbb{Z}$. Since $\mathsf{x}_j$ has order $p$ and $[\x_j, \, \x_k]$ is central, it follows that $\mathsf{H}(A)$ has exponent $p$. 

The quotient of $\mathsf{H}(A)$ by the central subgroup $\langle \z \rangle$ is an elementary abelian $p$-group of order $p^{2n}$. Therefore the only remaining issue is check that the center of $\mathsf{H}(A)$ is precisely $\langle \z \rangle$, and no larger.  To this purpose, it suffices to check that an element of the form $\mathsf{x}_1^{t_1} \ldots \mathsf{x}_{2n}^{t_{2n}}$  is central if and only if all the $t_j$ are zero. 
By using \eqref{eq:H(A)} and \eqref{eq:central-in-H(A)}, we get 
\begin{equation}
\begin{split}
[\mathsf{x}_k, \, \mathsf{x}_1^{t_1} \ldots 
\mathsf{x}_{2n}^{t_{2n}}]& =[\x_k, \, \x_1]^{t_1} \ldots 
[\x_k, \, \x_{2n}]^{t_{2n}} \\
& = \z^{a_{k1}t_1 + \ldots + a_{k2n} t_{2n} }.
\end{split}
\end{equation}
It follows that  $\mathsf{x}_1^{t_1} \ldots \mathsf{x}_{2n}^{t_{2n}}$ is central if and only if we have 
\begin{equation}
a_{k1}t_1 + \ldots + a_{k2n} t_{2n} =0, \quad k=1,\ldots, 2n.
\end{equation}
This is a homogeneous system of linear equations in the variables $t_1, \ldots, t_{2n}$ and whose coefficient matrix is $A$. Being $A$ non-singular by assumption, there is only the trivial solution $t_1=\ldots = t_{2n}=0$.  
\end{proof}

We are now in a position to prove our second main result, cf. \cite[Section 2]{CaPol19}.

\begin{theorem} \label{thm:non-strong}
If $b \geq 2$ is a positive integer and $p \geq 5$ is a prime number, then every extra-special $p$-group $G$ of order $p^{4b+1}$ admits a diagonal double Kodaira structure of non-strong type $(b, \, p)$.
\end{theorem}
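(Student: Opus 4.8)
The plan is to realize the given group $G$ as one of the finitely presented groups $\mathsf{H}(A)$ or $\mathsf{G}(A)$ of \eqref{eq:H(A)}--\eqref{eq:G(A)} for a carefully chosen non-degenerate antisymmetric matrix $A$ of order $4b$, and then to read off the structure generators directly as the distinguished generators $\x_1, \ldots, \x_{4b}$. Since $G$ is extra-special of order $p^{4b+1}$, by Proposition \ref{prop:extra-special-groups} it is isomorphic either to $\mathsf{H}_{4b+1}(\mathbb{Z}_p)$ or to $\mathsf{G}_{4b+1}(\mathbb{Z}_p)$, and by Lemma \ref{lem:identify-heisenberg} each of these is isomorphic to $\mathsf{H}(A)$, respectively $\mathsf{G}(A)$, for \emph{every} antisymmetric $A$ with $\det A \neq 0$. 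Because $G$ has nilpotence class $2$, it then suffices to exhibit generators satisfying the simplified relations of Remark \ref{rmk:commutators-in-the-center}. Accordingly I would label the generators as $\r_{1j} = \x_{2j-1}$, $\t_{1j} = \x_{2j}$ and $\r_{2j} = \x_{2b+2j-1}$, $\t_{2j} = \x_{2b+2j}$ for $j = 1, \ldots, b$, and choose $A$ so that its entries reproduce exactly the required commutators.

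Concretely, I would apply Lemma \ref{lem:technical} to obtain non-zero scalars $\lambda_1, \ldots, \lambda_b, \mu_1, \ldots, \mu_b \in \mathbb{Z}_p$ with $\sum_j \lambda_j = \sum_j \mu_j = 1$ and $\lambda_j \mu_j \neq 1$ for all $j$, and then build $A$ so that, within the pair indexed by $j$, one has $[\r_{1j}, \t_{1j}] = \z^{\lambda_j}$, $[\r_{2j}, \t_{2j}] = \z^{\mu_j}$, together with the cross-relations $[\r_{1j}, \t_{2k}] = \z^{-\delta_{jk}}$, $[\t_{1j}, \r_{2k}] = \z^{\delta_{jk}}$, $[\r_{1j}, \r_{2k}] = [\t_{1j}, \t_{2k}] = 1$, and all remaining commutators between \emph{distinct} pairs (which the structure leaves unconstrained) set equal to $1$. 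With this choice $A$ splits into $b$ mutually orthogonal $4 \times 4$ antisymmetric blocks $A_j$, one per pair index, and the surface relations of Remark \ref{rmk:commutators-in-the-center} reduce, via the class-$2$ identities $[a^{-1}, b^{-1}] = [a,b]$ and $[a^{-1}, b] = [a,b]^{-1}$, to $\z^{\sum_j \lambda_j} = \z$ and $\z^{-\sum_j \mu_j} = \z^{-1}$, which hold by the normalization $\sum_j \lambda_j = \sum_j \mu_j = 1$.

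The crux is verifying $\det A \neq 0$, so that Lemma \ref{lem:identify-heisenberg} applies, and here the block decomposition does the work: a direct Pfaffian computation on the block ordered $\r_{1j}, \t_{1j}, \r_{2j}, \t_{2j}$ gives $\mathrm{Pf}(A_j) = \lambda_j \mu_j - 1$, whence $\det A = \prod_{j=1}^b (\lambda_j \mu_j - 1)^2$. This is non-zero precisely because of the condition $\lambda_j \mu_j \neq 1$ furnished by Lemma \ref{lem:technical}; this is the single point where the hypothesis $p \geq 5$ is genuinely used, and it is the main obstacle that lemma is designed to remove. Having established the isomorphism $G \cong \mathsf{H}(A)$ (resp.\ $\mathsf{G}(A)$), the renamed generators satisfy all relations of a diagonal double Kodaira structure of type $(b, p)$ and generate $G$, while $o(\z) = p$ since $Z(G) = \langle \z \rangle$ by Remark \ref{rmk:center-H-G}. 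Finally, to see that the structure is of non-strong type, I would observe that $K_2 = \langle \r_{21}, \t_{21}, \ldots, \r_{2b}, \t_{2b}, \z \rangle$ has image of dimension $2b$ in the elementary abelian quotient $G/Z(G) = (\mathbb{Z}_p)^{4b}$, so that $|K_2| = p^{2b+1} < p^{4b+1} = |G|$; hence $K_2 \neq G$ and the structure cannot be strong.
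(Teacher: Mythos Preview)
Your proof is correct and follows essentially the same route as the paper's: you build the same antisymmetric matrix (the paper writes it in $2b\times 2b$ block form $\Omega_b=\begin{psmallmatrix}L_b&J_b\\J_b&M_b\end{psmallmatrix}$, whereas you reorder the basis into $b$ orthogonal $4\times4$ blocks to compute the Pfaffian), invoke Lemma~\ref{lem:technical} for the $\lambda_j,\mu_j$, and apply Lemma~\ref{lem:identify-heisenberg}. The only cosmetic difference is in the non-strongness check: the paper identifies $K_1,K_2$ with $\mathsf{H}_{2b+1}(\mathbb{Z}_p)$, while your dimension count in $G/Z(G)$ is an equally valid shortcut.
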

\begin{proof}
Again, we treat in detail the case $G=\mathsf{H}_{4b+1}(\mathbb{Z}_p)$; the proof for $G=\mathsf{G}_{4b+1}(\mathbb{Z}_p)$ is similar. Let us consider the anti-symmetric matrix 
\begin{equation} \label{eq:omega}
\Omega_b=
\begin{pmatrix}
L_b & J_b \\
J_b & M_b
\end{pmatrix} \in \mathrm{Mat}_{4b}(\mathbb{Z}_p),
\end{equation}
where the blocks are the elements of $\mathrm{Mat}_{2b}(\mathbb{Z}_p)$ given by
\begin{equation} \label{eq:L}
L_b=\begin{pmatrix}
\begin{matrix}0 & \lambda_1 \\ - \lambda_1 & 0\end{matrix} & & 0 \\
 & \ddots & \\
0 & & \begin{matrix}0 & \lambda_b \\ - \lambda_b & 0
\end{matrix}
\end{pmatrix} \quad
M_b=\begin{pmatrix}
\begin{matrix}0 & \mu_1 \\ - \mu_1 & 0\end{matrix} & & 0 \\
 & \ddots & \\
0 & & \begin{matrix}0 & \mu_b \\ - \mu_b & 0
\end{matrix}
\end{pmatrix}
\end{equation}
\begin{equation} \label{eq:J}
J_b=\begin{pmatrix}
\begin{matrix}
0 & -1 \\ 1 & \; \; 0\end{matrix} & & 0 \\
 & \ddots & \\
0 & &
\begin{matrix}0 & -1 \\ 1 & \; \; 0
\end{matrix}
\end{pmatrix}
\end{equation}
and $\lambda_1, \ldots, \lambda_b$, $\mu_1, \ldots, \mu_b$ are as in Lemma \ref{lem:technical}. We have 
\begin{equation} \label{eq:determinant-Omega}
\det \Omega_b = (1-\lambda_1 \mu_1)^2 (1- \lambda_2 \mu_2)^2 \cdots (1-\lambda_b \mu_b)^2 \neq 0
\end{equation}
and so, by Lemma \ref{lem:identify-heisenberg}, we infer that $\mathsf{H}(\Omega_b)$ is isomorphic to $\mathsf{H}_{4b+1}(\mathbb{Z}_p)$. By definition, the group $\mathsf{H}(\Omega_b)$ is generated by a set of  $4b+1$ elements 
\begin{equation}
\S = \{\r_{11}, \, \t_{11}, \ldots, \r_{1b}, \, \t_{1b}, \; \r_{21}, \, \t_{21}, \ldots, \r_{2b}, \, \t_{2b}, \; \z \}
\end{equation}
subject to the relations
\begin{equation} \label{eq:rel-heis-g}
\begin{split}
\mathsf{r}_{1j}^p & = \mathsf{t}_{1j}^p= \mathsf{r}_{2j}^p = \mathsf{t}_{2j}^p= \mathsf{z}^p=1, \\
[\mathsf{r}_{1j}, \, \z] & = [\mathsf{t}_{1j}, \, \z]= [\mathsf{r}_{2j}, \, \z] = [\mathsf{t}_{2j}, \, \z]=1, \\
[\mathsf{r}_{1j}, \, \mathsf{r}_{1k}]& =[\mathsf{t}_{1j}, \, \mathsf{t}_{1k}]=1, \\
[\mathsf{r}_{1j}, \, \mathsf{r}_{2k}]& =[\mathsf{t}_{1j}, \, \mathsf{t}_{2k}]=1, \\
[\mathsf{r}_{2j}, \, \mathsf{r}_{2k}]& =[\mathsf{t}_{2j}, \, \mathsf{t}_{2k}]=1, \\
[\mathsf{r}_{1j}, \,\mathsf{t}_{1 k}]& =\mathsf{z}^{\delta_{jk} \, \lambda_j}, \\
[\mathsf{r}_{2j}, \,\mathsf{t}_{2 k}]& =\mathsf{z}^{\delta_{jk} \, \mu_j}, \\
[\mathsf{r}_{1j}, \,\mathsf{t}_{2 k}]& = [\mathsf{r}_{2j}, \,\mathsf{t}_{1 k}]=\mathsf{z}^{-\delta_{jk}}.
\end{split}
\end{equation}
Using \eqref{eq:technical}, we can check that the two surface relations \eqref{eq:presentation-0-simple} are satisfied. Since the remaining relations \eqref{eq:presentation-01-simple}, \eqref{eq:presentation-1-simple} and \eqref{eq:presentation-3-simple} clearly hold, it follows that $\S$ provides a diagonal double Kodaira structure of type $(b, \, p)$ on $\mathsf{H}(\Omega_b)$, and so a diagonal double Kodaira structure of the same type on the isomorphic group $\mathsf{H}_{4b+1}(\mathbb{Z}_p)$. Such a structure is not strong, because the two subgroups $K_1=\langle  \r_{11}, \, \t_{11}, \ldots, \r_{1b}, \, \t_{1b}, \; \z  \rangle$ and $K_2=\langle  \r_{21}, \, \t_{21}, \ldots, \r_{2b}, \, \t_{2b}, \; \z  \rangle$ are isomorphic to $\mathsf{H}_{2b+1}(\mathbb{Z}_p)$, hence they both have index $p^{2b}$ in $G$.
\end{proof}

\begin{remark} \label{rmk:no-lemma-if-p=3}
The conclusion of Lemma \ref{lem:technical} is false when $p\leq 3$. If $p=2$, this follows immediately from the fact that there exists a unique non-zero element in $\mathbb{Z}_2$. If $p=3$, two non-zero elements $\lambda_i$, $\mu_i \in \mathbb{Z}_3$ satisfy $\lambda_i \mu_i \neq 1$ if and only if  $\lambda_i = -\mu_i$, so \eqref{eq:technical} cannot hold. This shows that, if Theorem \ref{thm:non-strong} is also true for $p \leq 3$, then it must be proved in a different way.
\end{remark}

\begin{remark}
The existence of a diagonal double Kodaira structure of non-strong type on $\mathsf{H}_{4b+1}(\mathbb{Z}_p)$ was first showed in \cite[Section 2]{CaPol19}, although we did not use this terminology; the original proof relies on some group-cohomological results related to the structure the cohomology algebra $H^*(\Sigma_b \times \Sigma_b - \Delta, \, \mathbb{Z}_p)$, where $\Sigma_b$ is a real surface of genus $b$ and $\Delta \subset \Sigma_b \times \Sigma_b$ is the diagonal. Besides, such a proof does not use Lemma \ref{lem:identify-heisenberg}, but an equivalent statement coming from the identification of $\mathsf{H}_{4b+1}(\mathbb{Z}_p)$ with the so-called \emph{symplectic Heisenberg group} $\mathsf{Heis}(V, \, \omega)$, where $V=H_1(\Sigma_b \times \Sigma_b - \Delta, \, \mathbb{Z}_p) \simeq (\mathbb{Z}_p)^{4b}$ and $\omega$ is any symplectic form on $V$.

Then, assuming that $p$ divides $b+1$, in \cite[Section 3]{CaPol19} we deduced the existence of a diagonal double Kodaira structure of strong type on $\mathsf{H}_{2b+1}(\mathbb{Z}_p)$ by setting
\begin{equation} 
\lambda_1=\ldots = \lambda_b=\mu_1=\ldots =  \mu_b = -1.
\end{equation}
Indeed, this yields a diagonal double Kodaira structure on a ``degenerate" Heisenberg group of order $p^{4b+1}$ (in this case $\det \Omega_b = 0$), admitting the group $\mathsf{H}_{2b+1}(\mathbb{Z}_p)$ as a quotient.

In this note we adopted, instead, a purely group-theoretical approach; it is less geometric but shorter than the original one and it naturally yields new results, namely the existence of  diagonal double Kodaira structures on the extra-special $p$-groups of exponent $p^2$.

\end{remark}

\section{Geometric interpretation: from diagonal double Kodaira structures to diagonal double Kodaira fibrations}

For more details on the basic definitions and results of this section, we refer the reader to the Introduction and to  \cite{CaPol19}, especially Sections 1 and 3. Recall that a \emph{Kodaira fibration} is a smooth, connected holomorphic fibration $f_1 \colon S \to B_1$, where $S$ is a compact complex surface and $B_1$ is a compact complex curve, which is not isotrivial. The genus $b_1:=g(B_1)$ is called the \emph{base genus} of the fibration, whereas the genus $g:=g(F)$, where $F$ is any fibre, is called the \emph{fibre genus}.
\begin{definition} \label{def:double-kodaira}
A \emph{double Kodaira surface} is a compact complex surface $S$, endowed with a \emph{double Kodaira fibration}, namely a surjective, holomorphic map $f \colon S \to B_1 \times B_2$ yielding, by composition with the natural projections, two Kodaira fibrations $f_i \colon S \to B_i$, $i=1, \,2$.
\end{definition}
The aim of this section is to show how the existence of  diagonal double Kodaira structures is equivalent to the existence of some special double Kodaira fibrations, that we call \emph{diagonal double Kodaira fibrations}. Looking at 
Gon\c{c}alves-Guaschi's presentation of surface pure braid groups, see \cite[Theorem 7]{GG04}, \cite[Theorem 1.7]{CaPol19}, we see that a finite group $G$ admits a diagonal double Kodaira structure $\S$ of type $(b, \, n)$ if and only if there is a surjective group homomorphism
\begin{equation} \label{eq:varphi}
\varphi \colon \mathsf{P}_2(\Sigma_b) \to G,
\end{equation} 
such that $\z:=\varphi(A_{12})$ has order $n$. 
Here $\mathsf{P}_2(\Sigma_b)$ is the pure braid group of genus $b$ on two strands, which is isomorphic to the fundamental group $\pi_1(\Sigma_b \times \Sigma_b - \Delta, \, (p_1, \, p_2))$  of the configuration space  of two ordered points on a real surface of genus $b$, and the generator $A_{12}$ is the homotopy class in $\Sigma_b \times \Sigma_b - \Delta$ of a loop in $\Sigma_b \times \Sigma_b$ that ``winds once" around the diagonal $\Delta$. 

With a slight abuse of notation, in the sequel we will use the symbol $\Sigma_b$ to indicate both a smooth complex curve of genus $b$ and its underlying real surface. By using Grauert-Remmert's extension theorem together with Serre's GAGA, the group epimorphism $\varphi$ gives the existence of a smooth, complex, projective surface $S$ endowed with a Galois cover 
\begin{equation}
\mathbf{f} \colon S \to \Sigma_b \times \Sigma_b,
\end{equation}
with Galois group $G$ and branched precisely over $\Delta$ with branching order $n$, see \cite[Proposition 3.4]{CaPol19}. 

The braid group $\mathsf{P}_2(\Sigma_b)$ is the middle term of two short exact sequences 
\begin{equation} \label{eq:oggi}
1\to \pi_1(\Sigma_b -\{p_i\}, \, p_j) \to \mathsf{P}_2(\Sigma_b) \to \pi_1(\Sigma_b, \, p_i) \to 1, 
\end{equation}
where $\{i, \, j\} = \{1, \, 2\}$, induced by the two natural projections of pointed topological spaces $(\Sigma_b \times \Sigma_b - \Delta, \, (p_1, \, p_2)) \to (\Sigma_b, \, p_i)$. Composing the left homomorphism in \eqref{eq:oggi} with $\varphi \colon \mathsf{P}_2(\Sigma_b) \to G$, we get two homomorphisms
\begin{equation} \label{eq:varphi-i}
\varphi_1 \colon \pi_1(\Sigma_b -\{p_2\}, \, p_1) \to G, \quad \varphi_2 \colon \pi_1(\Sigma_b -\{p_1\}, \, p_2) \to G,
\end{equation}
whose image equals $K_1$ and $K_2$, respectively. By construction, these are the homomorphisms induced by the restrictions $\mathbf{f}_i \colon \Gamma_i \to \Sigma_b$  of the Galois cover $\mathbf{f} \colon S \to \Sigma_b \times \Sigma_b$ to the fibres of the two natural projections $\pi_i \colon \Sigma_b \times \Sigma_b \to \Sigma_b$. Since $\Delta$ intersects transversally at a single point all the fibres of the natural projections, it follows that both such restrictions are branched at precisely one point, and the number of connected components of the smooth curve $\Gamma_i \subset S$ equals the index $m_i:=[G : K_i]$ of $K_i$ in $G$.

So, taking the Stein factorizations of the compositions $\pi_i \circ \mathbf{f} \colon S \to \Sigma_b$ as in the diagram below
\begin{equation} \label{dia:Stein-Kodaira-gi}
\begin{tikzcd}
  S \ar{rr}{\pi_i \circ \mathbf{f}}  \ar{dr}{f_i} & & \Sigma_b  \\
   & \Sigma_{b_i} \ar{ur}{\theta_i} &
  \end{tikzcd}
\end{equation}
we obtain two distinct Kodaira fibrations $f_i \colon S \to \Sigma_{b_i}$, hence a double Kodaira fibration by considering the product morphism  \begin{equation}
f=f_1 \times f_2 \colon S \to \Sigma_{b_1} \times \Sigma_{b_2}. 
\end{equation}
\begin{definition} \label{def:diagonal-double-kodaira}
We call $f \colon S \to \Sigma_{b_1} \times \Sigma_{b_2}$ the \emph{diagonal double Kodaira fibration} associated with the diagonal double Kodaira structure $\S$ on the finite group $G$. Conversely, we will say that a double Kodaira fibration $f \colon S \to \Sigma_{b_1} \times \Sigma_{b_2}$ is \emph{of diagonal type} $(b, \, n)$ if there exists a finite group $G$ and a diagonal double Kodaira structure $\S$ of type $(b, \, n)$ on it such that $f$ is associated with $\S$.  
\end{definition}
One can wonder whether all double Kodaira fibrations are of diagonal type; the answer is negative, as we will show in Example \ref{ex:non-diagonal}, see also Proposition \ref{prop:slope-diagonal} and Remark \ref{rmk:6-4sqrt2}.

\medskip

Since the morphism $\theta_i \colon \Sigma_{b_i} \to \Sigma_b$ is \'{e}tale of degree $m_i$, by using the Hurwitz formula we obtain
\begin{equation} \label{eq:expression-gi}
b_1 -1 =m_1(b-1), \quad  b_2 -1 =m_2(b-1).
\end{equation}
Moreover, the fibre genera $g_1$, $g_2$ of the Kodaira fibrations $f_1 \colon S \to \Sigma_{b_1}$, $f_2 \colon S \to \Sigma_{b_2}$ are computed by the formulae
\begin{equation} \label{eq:expression-gFi}
2g_1-2 = \frac{|G|}{m_1} (2b-2 + \mathfrak{n} ), \quad 2g_2-2 = \frac{|G|}{m_2} \left( 2b-2 + \mathfrak{n} \right),
\end{equation}
where $\mathfrak{n}:= 1 - 1/n$. Finally, the surface $S$ fits into a diagram
\begin{equation} \label{dia:degree-f-general}
\begin{tikzcd}
  S \ar{rr}{\mathbf{f}}  \ar{dr}{f} & & \Sigma_b \times \Sigma_b  \\
   & \Sigma_{b_1} \times \Sigma_{b_2} \ar[ur, "\theta_1 \times \theta_2"{sloped, anchor=south}] &
  \end{tikzcd}
\end{equation}
so that the diagonal double Kodaira fibration $f \colon S \to  \Sigma_{b_1} \times \Sigma_{b_2}$ is a finite cover of degree $\frac{|G|}{m_1m_2}$, branched precisely over the curve
\begin{equation} \label{eq:branching-f}
(\theta_1 \times \theta_2)^{-1}(\Delta)=\Sigma_{b_1} \times_{\Sigma_b} \Sigma_{b_2}.
\end{equation}
Such a curve is always smooth, being the preimage of a smooth divisor via an \'{e}tale morphism. However, it is reducible in general, see \cite[Proposition 3.11]{CaPol19}. The invariants of $S$ can be now computed as follows, see \cite[Proposition 3.8]{CaPol19}.

\begin{proposition} \label{prop:invariant-S-G}
Let $f \colon S \to \Sigma_{b_1} \times \Sigma_{b_2}$ be the diagonal double Kodaira fibration associated with a diagonal double Kodaira structure $\S$ of type $(b, \, n)$ on a finite group $G$. Then we have
\begin{equation} \label{eq:invariants-S-G}
\begin{split}
c_1^2(S) & = |G|\,(2b-2) ( 4b-4 + 4 \mathfrak{n} - \mathfrak{n}^2 ) \\
c_2(S) & =   |G|\,(2b-2) (2b-2 + \mathfrak{n}).
\end{split}
\end{equation}
As a consequence, the slope and the signature of $S$ can be expressed as
\begin{equation} \label{eq:slope-signature-S-G}
\begin{split}
\nu(S) & = \frac{c_1^2(S)}{c_2(S)} = 2+ \frac{2 \mathfrak{n} - \mathfrak{n}^2}{2b-2 + \mathfrak{n} } \\
\sigma(S) & = \frac{1}{3}\left(c_1^2(S) - 2 c_2(S) \right) = \frac{1}{3} \, |G| \, (2b-2)(2 \mathfrak{n} - \mathfrak{n}^2),
\end{split}
\end{equation}
where $\mathfrak{n}=1-1/n$.
\end{proposition}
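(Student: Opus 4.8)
The plan is to exploit the description of $S$ as the Galois cover $\mathbf{f}\colon S \to \Sigma_b \times \Sigma_b$ of degree $|G|$, branched over the diagonal $\Delta$ with branching order $n$, and to compute the two Chern numbers by intersection theory on the smooth surface $\Sigma_b \times \Sigma_b$. The slope and the signature then follow formally: the former by dividing the two expressions, the latter from the Hirzebruch signature theorem $\sigma(S) = \tfrac{1}{3}\bigl(c_1^2(S) - 2c_2(S)\bigr)$ valid for any compact complex surface. Thus the entire content is in the two formulae \eqref{eq:invariants-S-G}.

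For $c_2(S) = e(S)$ I would use that each $f_i \colon S \to \Sigma_{b_i}$ is a smooth (submersive) fibration, hence a differentiable fibre bundle by Ehresmann's theorem, so that the Euler characteristic is multiplicative: $c_2(S) = (2b_1 - 2)(2g_1 - 2)$. Substituting $2b_1 - 2 = m_1(2b-2)$ from \eqref{eq:expression-gi} and $2g_1 - 2 = \tfrac{|G|}{m_1}(2b - 2 + \mathfrak{n})$ from \eqref{eq:expression-gFi}, the factor $m_1$ cancels and one obtains $c_2(S) = |G|(2b-2)(2b-2+\mathfrak{n})$. As an independent check, the same value arises from the branched-cover formula $e(S) = |G|\,e(\Sigma_b \times \Sigma_b) - |G|\,\mathfrak{n}\,e(\Delta)$, using $e(\Sigma_b \times \Sigma_b) = (2b-2)^2$ and $e(\Delta) = -(2b-2)$.

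For $c_1^2(S) = K_S^2$ I would start from the ramification formula for the branched Galois cover, namely $K_S = \mathbf{f}^* K_{\Sigma_b \times \Sigma_b} + (n-1)\,\widetilde{\Delta}$, where $\widetilde{\Delta} = \mathbf{f}^{-1}(\Delta)_{\mathrm{red}}$ satisfies $\mathbf{f}^* \Delta = n\,\widetilde{\Delta}$ and $\mathbf{f}_* \widetilde{\Delta} = \tfrac{|G|}{n}\,\Delta$. Expanding the square produces three terms, which I would evaluate by the projection formula together with the following intersection numbers on $\Sigma_b \times \Sigma_b$: one has $K_{\Sigma_b \times \Sigma_b}^2 = 2(2b-2)^2$; the self-intersection $\Delta^2 = \deg T_{\Sigma_b} = 2 - 2b$, the normal bundle of the diagonal being the tangent bundle of $\Sigma_b$; and, by adjunction, $K_{\Sigma_b \times \Sigma_b} \cdot \Delta = (2b-2) - \Delta^2 = 2(2b-2)$. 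Concretely, $(\mathbf{f}^* K)^2 = |G|\cdot 2(2b-2)^2$, the cross term equals $2(n-1)\tfrac{|G|}{n}(K\cdot\Delta) = 4(n-1)\tfrac{|G|}{n}(2b-2)$, and $\widetilde{\Delta}^2 = \tfrac{1}{n^2}(\mathbf{f}^*\Delta)^2 = -\tfrac{|G|}{n^2}(2b-2)$. Collecting and writing $\mathfrak{n} = (n-1)/n$ yields $K_S^2 = |G|(2b-2)\bigl(2(2b-2) + 4\mathfrak{n} - \mathfrak{n}^2\bigr)$, which is the asserted value.

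The computation is routine once the geometric picture is in place, so the step requiring the most care is the self-intersection of the ramification divisor $\widetilde{\Delta}$: one must combine the local pullback relation $\mathbf{f}^*\Delta = n\,\widetilde{\Delta}$ with the global normal-bundle identity $\Delta^2 = 2 - 2b$, and it is precisely the interplay of these two facts that produces the $-\mathfrak{n}^2$ term distinguishing the slope from the naive value $2$. The closing simplifications of $\nu(S)$ and $\sigma(S)$ are then elementary algebra in $\mathfrak{n}$.
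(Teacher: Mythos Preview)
Your computation is correct. Note, however, that the paper does not actually supply a proof of this proposition: it simply cites \cite[Proposition 3.8]{CaPol19}. So there is nothing in the present paper to compare your argument against line by line.

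That said, your approach is exactly the natural one and is almost certainly what underlies the cited reference: compute $c_2(S)$ either from the multiplicativity of the Euler characteristic in the smooth fibration $f_1$ (using \eqref{eq:expression-gi} and \eqref{eq:expression-gFi}) or, equivalently, from the branched-cover Euler characteristic formula applied to $\mathbf{f}$; then compute $K_S^2$ from the Hurwitz-type ramification formula $K_S = \mathbf{f}^*K_{\Sigma_b\times\Sigma_b} + (n-1)\widetilde{\Delta}$, expanding the square and evaluating via the projection formula together with $K_{\Sigma_b\times\Sigma_b}^2 = 2(2b-2)^2$, $\Delta^2 = 2-2b$, and $K_{\Sigma_b\times\Sigma_b}\cdot\Delta = 2(2b-2)$. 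All of your intersection numbers and the final collection into the factor $4b-4+4\mathfrak{n}-\mathfrak{n}^2$ are correct, and the slope and signature follow formally.
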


\begin{remark} \label{rmk:sdks-characterization}
By definition, $\S$ is a diagonal double Kodaira structure of strong type if and only if $m_1=m_2=1$, that in turn implies $b_1=b_2=b$, i.e., $f=\mathbf{f}$. In other words, $\S$ is of strong type if and only if no Stein factorization as in \eqref{dia:Stein-Kodaira-gi} is needed or, equivalently, if and only if the Galois cover $\mathbf{f}\colon S \to \Sigma_b \times \Sigma_b$ induced by \eqref{eq:varphi} is already a double Kodaira fibration, branched on the diagonal $\Delta \subset \Sigma_b \times \Sigma_b$. 
\end{remark}

We can now specialize the previous results, by taking as $G$ an extra-special $p$-group and using what we have proved in Section \ref{sec:extra-special}. Let $\boldsymbol{\upomega} \colon \mathbb{N} \to \mathbb{N}$ be the arithmetic function counting the number of distinct prime factors of a positive integer, see \cite[p.335]{HarWr08}. The following is  \cite[Corollary 3.18]{CaPol19}.

\begin{theorem} \label{thm:lim-sup}
Let $\Sigma_b$ be any smooth curve of genus $b$. Then there exists a double Kodaira fibration $f \colon S \to \Sigma_b \times \Sigma_b$. Moreover, denoting by $\kappa(b)$ the number of such fibrations, we have
\begin{equation*}
\kappa(b) \geq \boldsymbol{\upomega}(b+1).
\end{equation*}
In particular,
\begin{equation*}
\limsup_{b \rightarrow + \infty} \kappa(b) = + \infty.
\end{equation*}
\end{theorem}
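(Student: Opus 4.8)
The goal is to prove Theorem \ref{thm:lim-sup}, which combines an existence statement with a lower bound on the counting function $\kappa(b)$ and its consequence for the $\limsup$. The plan is to leverage Theorem \ref{thm:extra-special-sdks} together with the geometric dictionary built up in Section 3 (culminating in Remark \ref{rmk:sdks-characterization}), and then to count primes.

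First I would recall that, by Remark \ref{rmk:sdks-characterization}, a diagonal double Kodaira structure \emph{of strong type} $(b, \, n)$ on a finite group $G$ produces a genuine double Kodaira fibration $f = \mathbf{f} \colon S \to \Sigma_b \times \Sigma_b$ over the \emph{fixed} base $\Sigma_b \times \Sigma_b$ itself, with no Stein factorization needed. This is exactly what we want, since the theorem asserts the existence of a fibration over an \emph{arbitrary} prescribed curve $\Sigma_b$. So the existence half reduces to exhibiting at least one strong-type structure on some finite group for every $b$. Now Theorem \ref{thm:extra-special-sdks} says that whenever a prime $p$ divides $b+1$, every extra-special $p$-group of order $p^{2b+1}$ carries a diagonal double Kodaira structure of strong type $(b,\,p)$. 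Since $b \geq 2$ forces $b+1 \geq 3$, the integer $b+1$ has at least one prime factor, so such a $p$ exists and the existence statement follows.

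For the lower bound $\kappa(b) \geq \boldsymbol{\upomega}(b+1)$, the key point is that distinct primes yield \emph{distinct} fibrations. For each prime $p \mid b+1$, Theorem \ref{thm:extra-special-sdks} gives a strong-type structure of type $(b,\,p)$ on an extra-special $p$-group $G$ of order $p^{2b+1}$, hence a double Kodaira fibration $f_p \colon S_p \to \Sigma_b \times \Sigma_b$. These are pairwise non-isomorphic: the Galois group $G$ of the cover $\mathbf{f}$ is a $p$-group, so its order $p^{2b+1}$ distinguishes different primes, and this order can be read off from the fibration as the degree of the branched cover (or, more invariantly, from the numerical data in Proposition \ref{prop:invariant-S-G}, where $|G|$ enters the formulas for $c_1^2(S)$ and $c_2(S)$). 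Since $\boldsymbol{\upomega}(b+1)$ counts exactly the distinct prime factors of $b+1$, we obtain at least $\boldsymbol{\upomega}(b+1)$ mutually distinct fibrations, giving the claimed inequality.

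Finally, the $\limsup$ statement follows from elementary number theory: the arithmetic function $\boldsymbol{\upomega}$ is unbounded, since for instance the product of the first $k$ primes has $\boldsymbol{\upomega} = k$, so one can choose $b+1$ to be such a primorial and let $k \to \infty$. The main obstacle I anticipate is not the counting but the claim that different primes genuinely produce \emph{geometrically distinct} fibrations rather than merely distinct presentations; this requires being precise about what it means for two double Kodaira fibrations over the same base to be ``the same'', and verifying that the invariant $|G|$ (equivalently, the degree of $\mathbf{f}$ or the signature $\sigma(S)$) separates them. Once one fixes a reasonable notion of equivalence and observes that the prime $p$ is recoverable from the covering data, the argument closes cleanly.
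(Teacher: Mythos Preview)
Your proposal is correct and follows essentially the same route as the paper: invoke Theorem~\ref{thm:extra-special-sdks} for each prime $p \mid b+1$, use Remark~\ref{rmk:sdks-characterization} to land on $\Sigma_b \times \Sigma_b$, and then separate the resulting fibrations by a numerical invariant. The only difference is that the paper distinguishes the fibrations via the signature $\sigma(S)$ from \eqref{eq:slope-signature-S-G}, observing that for fixed $b$ it is strictly increasing in $p$; this shows the total spaces are pairwise non-homeomorphic and so sidesteps the ambiguity you flag about what ``distinct fibrations'' should mean, whereas your primary invariant (the degree $|G|=p^{2b+1}$) only separates the covering maps.
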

\begin{proof}
Given a prime number $p$ dividing $b+1$, every extra-special $p$-group $G$ of order $p^{2b+1}$ admits a diagonal double Kodaira structure of strong type $(b, \, p)$, see Theorem \ref{thm:extra-special-sdks}, and this gives in turn a diagonal double Kodaira fibration $f \colon S \to \Sigma_b \times \Sigma_b$, see Remark \ref{rmk:sdks-characterization}. Two different prime divisors of $b+1$ give rise to two non-homeomorphic double Kodaira surfaces, because the corresponding signatures are different (use \eqref{eq:slope-signature-S-G} with $n=p$ and note that, for fixed $b$, the function expressing $\sigma(S)$ is strictly increasing in $p$). Since the number of distinct prime factors of $b+1$ can be arbitrarily large when $b$ goes to infinity, the last statement follows.
\end{proof}

The case $b=2, \, p=3$ is particularly interesting. In fact, it provides (to our knowledge) the first 
``double  solution" to a problem (posed by Geoff Mess) from Kirby's problem list in low-dimensional topology (\cite[Problem 2.18A]{Kir97}), asking what is the smallest number $b$ for which there exists a real surface bundle over a surface with base genus $b$ and non-zero signature, see \cite[Proposition 3.19]{CaPol19}.

\begin{theorem} \label{thm:Kirby}
Let $S$ be the diagonal double Kodaira surface associated with a diagonal double Kodaira structure of strong type $(2, \, 3)$ on an extra-special $3$-group $G$ of order $3^5$. Then the real manifold $X$ underlying $S$ is a closed, orientable $4$-manifold of signature $144$ that can be realized as a real surface bundle over a surface of genus $2$, with fibre  genus $325$, in two different ways.
\end{theorem}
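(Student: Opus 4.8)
The plan is to realize this statement as a direct specialization of the machinery already developed, with $b=2$, $p=n=3$ and $|G|=3^5=243$. First I would invoke Theorem \ref{thm:extra-special-sdks}: since $p=3$ divides $b+1=3$, every extra-special $3$-group $G$ of order $3^5$ carries a diagonal double Kodaira structure $\S$ of strong type $(2,\,3)$, and hence an associated diagonal double Kodaira fibration. By Remark \ref{rmk:sdks-characterization}, the strong condition forces $m_1=m_2=1$, so that $b_1=b_2=b=2$ and $f=\mathbf{f}\colon S \to \Sigma_2 \times \Sigma_2$; composing with the two projections $\pi_1,\pi_2$ yields two Kodaira fibrations $f_1,f_2\colon S \to \Sigma_2$, each with base a genus-$2$ curve.

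Next I would read off the topological statement. Since $S$ is a smooth complex projective surface, its underlying real manifold $X$ is automatically closed, orientable, and of real dimension $4$. Each $f_i$ is a proper holomorphic submersion with connected fibres, hence by Ehresmann's theorem a smooth locally trivial fibre bundle; this exhibits $X$ as a real surface bundle over the genus-$2$ surface underlying $\Sigma_2$. The ``two different ways'' are precisely the two projections $f_1$ and $f_2$: their fibres are the $\mathbf{f}$-preimages of the fibres of $\pi_1$ and of $\pi_2$, which are transverse curves inside $\Sigma_2 \times \Sigma_2$ and therefore distinct curves inside $S$, so $f_1 \neq f_2$ as bundle structures. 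The involution \eqref{eq:substitutions-S} exchanges the two structures, confirming that they are genuinely paired rather than coincident.

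It then remains to compute the two numerical invariants by substitution. Put $\mathfrak{n}=1-1/n=2/3$. From the signature formula in \eqref{eq:slope-signature-S-G} I get $\sigma(S)=\tfrac{1}{3}\,|G|\,(2b-2)(2\mathfrak{n}-\mathfrak{n}^2)=\tfrac{1}{3}\cdot 243\cdot 2\cdot\bigl(\tfrac{4}{3}-\tfrac{4}{9}\bigr)=144$, which is non-zero, so this indeed settles Mess's question for base genus $2$. From the fibre-genus formula \eqref{eq:expression-gFi}, using $m_1=m_2=1$, I get $2g_i-2=|G|\,(2b-2+\mathfrak{n})=243\cdot\tfrac{8}{3}=648$, hence $g_1=g_2=325$.

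Because all the substantial work — the existence of the strong structure, the passage from structures to fibrations, and the invariant formulas — has already been carried out, the only real points demanding care are conceptual rather than computational: namely, checking that the strong-type hypothesis pins the base genus to exactly $b=2$ (so that we genuinely land on a surface bundle over a genus-$2$ base) and that the two Kodaira fibrations are honestly distinct. The arithmetic producing $144$ and $325$ is then routine.
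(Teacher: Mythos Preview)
Your proposal is correct and follows essentially the same route as the paper, which treats this theorem as an immediate specialization of Theorem~\ref{thm:extra-special-sdks}, Remark~\ref{rmk:sdks-characterization}, and the invariant formulae in Proposition~\ref{prop:invariant-S-G} to the case $b=2$, $p=3$, $|G|=3^5$. Your added remarks (invoking Ehresmann's theorem for the bundle structure and noting that the fibres of $f_1$ and $f_2$ arise from transverse families in $\Sigma_2\times\Sigma_2$, hence are distinct) make explicit points that the paper leaves implicit, but the substance is the same.
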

This naturally leads to the following interesting problem, see \cite[Question 3.20]{CaPol19}.
\begin{question} \label{q:minimal-values}
What are the minimal possible fibre genus $f_{\mathrm{min}}$ and the minimum possible signature $\sigma_{\mathrm{min}}$ for a double Kodaira fibration $S \to \Sigma_2 \times \Sigma_2$? 
\end{question}
Note that Theorem \ref{thm:Kirby} implies $f_{\mathrm{min}} \leq 325$ and $\sigma_{\mathrm{min}} \leq 144$.
\medskip

Let us show now how to use our methods in order to obtain double Kodaira fibrations with slope strictly higher than $2+1/3$. Fix $b=2$ and let $p \geq 5$ be a prime number. Then every
extra-special $p$-group $G$ of order $p^{4b+1}=p^9$ admits a diagonal double Kodaira structure $\S$ of non-strong type $(2, \, p)$ and such that $m_1=m_2=p^{2b}$, see Theorem \ref{thm:non-strong}. Setting $b':=p^{4}+1$, cf. equations \eqref{eq:expression-gi}, we obtain the following particular case of \cite[Proposition 3.12]{CaPol19}.

\begin{theorem} \label{thm:slope}
Let $f \colon S_{2, \, p} \to \Sigma_{b'} \times \Sigma_{b'}$ be the diagonal double Kodaira fibration associated with a diagonal double Kodaira structure of non-strong type $(2, \, p)$ on an extra-special $p$-group $G$ of order $p^9$. Then the maximum slope $\nu(S_{2, \, p})$ is attained for precisely two values of $p$, namely
\begin{equation}
\nu(S_{2, \, 5}) = \nu(S_{2, \, 7})= 2 + \frac{12}{35}.
\end{equation}
Furthermore, $\nu(S_{2, \, p}) > 2 + 1/3$ for all $p \geq 5$. More precisely, if $p \geq 7$ the function $\nu(S_{2, \, p})$ is strictly decreasing and
\begin{equation*}
\lim_{p \rightarrow +\infty} \nu(S_{2, \, p}) = 2 + \frac{1}{3}.
\end{equation*}
\end{theorem}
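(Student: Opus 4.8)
The plan is to reduce the statement to the analysis of a single real function of one variable. By Proposition~\ref{prop:invariant-S-G}, applied with $b=2$ and $n=p$, the slope is
\begin{equation*}
\nu(S_{2,\,p}) = 2 + \frac{2\mathfrak{n} - \mathfrak{n}^2}{2 + \mathfrak{n}}, \qquad \mathfrak{n} = 1 - \frac{1}{p} = \frac{p-1}{p},
\end{equation*}
since $2b-2 = 2$. Hence, setting $g(x) := \dfrac{2x - x^2}{2+x}$ and $x_p := (p-1)/p$, everything reduces to understanding the behaviour of the sequence $g(x_p)$ as $p$ runs over the primes $\geq 5$.

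First I would study $g$ as a smooth function on the interval $(0,1)$, noting that $x_p \in (0,1)$ and that $p \mapsto x_p$ is strictly increasing with $x_p \to 1$. A direct computation gives
\begin{equation*}
g'(x) = \frac{4 - 4x - x^2}{(2+x)^2},
\end{equation*}
whose numerator vanishes exactly at $x^* = 2\sqrt{2} - 2 \approx 0.828$ inside $(0,1)$. Thus $g$ is strictly increasing on $(0, x^*)$ and strictly decreasing on $(x^*, 1)$, with a unique maximum at $x^*$.

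The crux is to locate the discrete values $x_p$ relative to $x^*$. One checks that $x_5 = 4/5 = 0.8 < x^* < 6/7 = x_7$, so the only two candidates for the maximum of $g(x_p)$ are $p = 5$ and $p = 7$, while every prime $p \geq 7$ satisfies $x_p \geq 6/7 > x^*$ and therefore lies on the decreasing branch. The decisive point is then the exact arithmetic coincidence
\begin{equation*}
g\!\left(\tfrac{4}{5}\right) = g\!\left(\tfrac{6}{7}\right) = \frac{12}{35},
\end{equation*}
which yields $\nu(S_{2,\,5}) = \nu(S_{2,\,7}) = 2 + 12/35$ and, since $g$ is strictly decreasing past $x^*$, forces $g(x_p) < 12/35$ for every prime $p \geq 11$. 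This establishes that the maximum is attained at precisely the two values $p = 5$ and $p = 7$.

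For the remaining assertions I would argue as follows. Strict monotonicity of $\nu(S_{2,\,p})$ for $p \geq 7$ is immediate, because $p \mapsto x_p$ is strictly increasing with $x_p \geq 6/7 > x^*$ and $g$ is strictly decreasing on $(x^*, 1)$. The limit follows from continuity, since $x_p \to 1$ and $g(1) = 1/3$, giving $\lim_{p \to \infty} \nu(S_{2,\,p}) = 2 + 1/3$. Finally, $\nu(S_{2,\,p}) > 2 + 1/3$ for all $p \geq 5$: for $p \geq 7$ the values decrease strictly to the limit $2 + 1/3$ and hence stay above it, while for $p = 5$ it suffices to note that $12/35 > 1/3$. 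I expect the main obstacle to be the exact equality $g(4/5) = g(6/7)$: it is a genuine arithmetic coincidence, not forced by the monotonicity analysis alone, and it is exactly what makes the phrase ``precisely two values'' correct; everything else is routine one-variable calculus.
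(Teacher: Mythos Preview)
Your argument is correct and complete: the paper itself does not supply a proof of this theorem but merely records it as a special case of \cite[Proposition~3.12]{CaPol19}, and your direct analysis of the single-variable function $g(x)=(2x-x^2)/(2+x)$ via Proposition~\ref{prop:invariant-S-G} is exactly the natural route. The verification of the critical point $x^*=2\sqrt{2}-2$, the location $x_5<x^*<x_7$, and the arithmetic identity $g(4/5)=g(6/7)=12/35$ are all accurate, and the remaining monotonicity and limit claims follow as you say.
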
   

\begin{remark} \label{rmk:record-slope}
The original examples by Atiyah, Hirzebruch and Kodaira have slope lying in the interval $(2, 2 + 1/3]$, see \cite[p. 221]{BHPV03}. Our construction provides an infinite family of Kodaira fibred surfaces such that $2+1/3 < \nu(S) \leq 2 + 12/35$, maintaining at the same time a complete control on both the base genus and the signature.
By contrast, the ``tautological construction" used in \cite{CatRol09} yields a higher slope than ours, namely $2+ 2/3$, but it involves an {\'e}tale pullback ``of sufficiently large degree'', that completely loses control on the other quantities. 
\end{remark}
\begin{remark} \label{rmk:record-slope-bis}
By Liu's inequality (see \cite{Liu96}), every Kodaira fibred surface $S$ satisfies $\nu(S) <3$. The value $\nu=2 + 2/3$ is the current record for the slope, in particular it is  unknown whether the slope of a Kodaira fibred surface can be arbitrarily close to $3$. 
\end{remark}

Finally, let us show that there exist double Kodaira fibrations that are not of diagonal type.

\begin{example} \label{ex:non-diagonal}
Take any double Kodaira fibration $f \colon S \to \Sigma_{b_1} \times \Sigma_{b_2}$ with $b_2=2$ and $\nu(S)=2+1/2$, see for instance \cite[Examples 6.3 and 6.6 of Table 3]{LLR17}. We claim that such a $f$ cannot be of diagonal type. In fact, assume by contradiction that $f$ is associated with a diagonal double Kodaira structure of type $(b, \, n)$ on a finite group $G$. Then, by using the second equation in \eqref{eq:expression-gi}, we obtain $2-1=m_2(b-1)$, hence $b=2$. Substituting in the slope expression in \eqref{eq:slope-signature-S-G}, we get
\begin{equation}
\frac{1}{2}=\frac{2 \mathfrak{n} - \mathfrak{n}^2}{2 + \mathfrak{n} },
\end{equation}
or, equivalently, $n^2-n+2=0$, that has no integer solutions.
  \end{example}

In fact, Example \ref{ex:non-diagonal} is an instance of the following, more general result.

\begin{proposition} \label{prop:slope-diagonal}
Let $f \colon S \to \Sigma_{b_1} \times \Sigma_{b_2}$ be a double Kodaira fibration of diagonal type $(b, \, n)$. Then we have $\nu(S)=2+s$, where $s$ is a strictly positive rational number such that $(s+2)^2-8bs$ is a perfect square in $\mathbb{Q}$. As a consequence, we obtain $s < 6-4\sqrt{2}$.
\end{proposition}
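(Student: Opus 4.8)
The plan is to start from the slope formula already established in Proposition~\ref{prop:invariant-S-G}, namely $\nu(S) = 2 + s$ with
\begin{equation*}
s = \frac{2\mathfrak{n} - \mathfrak{n}^2}{2b - 2 + \mathfrak{n}}, \qquad \mathfrak{n} = 1 - \frac{1}{n},
\end{equation*}
and to analyze $s$ purely algebraically. First I would substitute $\mathfrak{n} = (n-1)/n$ and clear denominators to rewrite
\begin{equation*}
s = \frac{n^2 - 1}{n(2bn - n - 1)}.
\end{equation*}
This form makes two facts immediate: $s \in \mathbb{Q}$, since $b$ and $n$ are integers, and $s > 0$, since both numerator and denominator are positive (using $n \geq 2$ and $b \geq 2$). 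This establishes that $s$ is a strictly positive rational number.

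The core of the argument is the perfect-square claim. Writing $s = N/D$ with $N = n^2 - 1$ and $D = n(2bn - n - 1)$, I would compute
\begin{equation*}
(s+2)^2 - 8bs = \frac{(N + 2D)^2 - 8bND}{D^2},
\end{equation*}
so that, $D^2$ being a square, it suffices to show the numerator is a perfect square in $\mathbb{Q}$. A direct expansion, exploiting the factorizations $N + 2D = 4bn^2 - (n+1)^2$ and $-8bn^3 - 16bn^2 - 8bn = -8bn(n+1)^2$, collapses the numerator to
\begin{equation*}
(N+2D)^2 - 8bND = 16b^2 n^2 - 8bn(n+1)^2 + (n+1)^4 = \left(4bn - (n+1)^2\right)^2.
\end{equation*}
Hence $(s+2)^2 - 8bs = \big((4bn - (n+1)^2)/D\big)^2$ is a square in $\mathbb{Q}$, as required.

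For the final bound I would use that a perfect square is non-negative, so $(s+2)^2 - 8bs \geq 0$, equivalently $s + 4 + 4/s = (s+2)^2/s \geq 8b \geq 16$, invoking $b \geq 2$. The function $s \mapsto s + 4 + 4/s$ on $(0, +\infty)$ attains its minimum $8$ at $s = 2$, and the inequality $\geq 16$ amounts to $s^2 - 12s + 4 \geq 0$, whose roots are $6 \pm 4\sqrt{2}$; thus the inequality holds exactly when $s \leq 6 - 4\sqrt{2}$ or $s \geq 6 + 4\sqrt{2}$. Since the numerator $2\mathfrak{n} - \mathfrak{n}^2 < 1$ (as $\mathfrak{n} < 1$) while the denominator $2b - 2 + \mathfrak{n} > 1$, we have $s < 1 < 6 + 4\sqrt{2}$, which rules out the upper branch and forces $s \leq 6 - 4\sqrt{2}$. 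Strictness follows because $6 - 4\sqrt{2}$ is irrational whereas $s$ is rational, so equality cannot occur.

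The main obstacle is the perfect-square identity: expanding $(N+2D)^2 - 8bND$ is a genuine, if mechanical, polynomial computation, and the decisive insight is to recognize the two factorizations that force everything into the clean square $(4bn - (n+1)^2)^2$. Everything else—the rationality and positivity of $s$, and the one-variable optimization producing the threshold $6 - 4\sqrt{2}$—is routine once this identity is in hand.
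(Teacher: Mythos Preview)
Your proof is correct, but it takes a different route from the paper's. The paper rewrites the slope relation as a quadratic in $n$, namely $(2bs-s-1)n^2 - sn + 1 = 0$, and then argues that since $n$ is an integer the discriminant $(s+2)^2 - 8bs$ must be a perfect square in $\mathbb{Q}$; this is a two-line conceptual argument with no polynomial manipulation. Your approach instead computes $(s+2)^2-8bs$ explicitly as $\bigl((4bn-(n+1)^2)/D\bigr)^2$, which is more laborious but has the virtue of exhibiting the square root concretely. For the bound, the paper appeals to Liu's inequality $\nu(S)<3$ to get $s<1$ and rule out the branch $s\geq 6+4\sqrt{2}$, whereas you obtain $s<1$ directly from the formula (numerator $2\mathfrak{n}-\mathfrak{n}^2 = 1-1/n^2<1$, denominator $2b-2+\mathfrak{n}>1$), making your argument self-contained. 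You also spell out the irrationality step needed for the \emph{strict} inequality $s<6-4\sqrt{2}$, which the paper leaves implicit.
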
 
\begin{proof}
By definition $\nu(S)$ is a rational number, and moreover $\nu(S)>2$ because of Arakelov inequality, 
see \cite{Be82} . So we can write $\nu(S)=2+s$, with $s>0$. Since we are assuming that $S$ is associated with a diagonal double Kodaira structure of type $(b, \, n)$, the slope identity in \eqref{eq:slope-signature-S-G} yields
\begin{equation}
s=\frac{2 \mathfrak{n} - \mathfrak{n}^2}{2b-2 + \mathfrak{n} },
\end{equation}
or, equivalently, 
\begin{equation}
(2bs-s-1)n^2-sn+1=0. 
\end{equation}
The discriminant of this quadratic equation is $(s+2)^2-8bs$, and this quantity must be a perfect square in $\mathbb{Q}$ because $n$ is an integer number. In particular, we have $(s+2)^2 \geq 8bs$, that is, 
\begin{equation} \label{eq:b-s}
2 \leq b \leq \frac{(s+2)^2}{8s}.
\end{equation}
From \eqref{eq:b-s} we deduce the inequality $(s+2)^2-16s \geq 0$; since Remark \ref{rmk:record-slope-bis} gives $s <1$, we infer $s < 6-4\sqrt{2}$.
\end{proof}

\begin{remark} \label{rmk:6-4sqrt2}
Since $6-4\sqrt{2} = 0. 3431...$ and $12/35 = 0. 3428...$, we see that the surfaces $S_{2, \, 5}$ and $S_{2, \, 7}$, described in Theorem \textrm{\ref{thm:slope}}, ``almost maximize" the slope of a double Kodaira fibration of diagonal type. In fact, the upper bound $s < 6-4\sqrt{2}$ shows that high slope examples, like Catanese-Rollenske's one for which $s=2/3$, are out of reach of the methods of this paper. 
\end{remark}

\bigskip \bigskip

\end{document}